\theoremstyle{plain}
\newtheorem{theorem}{Theorem}[section]
\newtheorem{lemma}[theorem]{Lemma}
\newtheorem{pro}[theorem]{Proposition}
\newtheorem{Qu}[theorem]{Question}
\theoremstyle{definition}
\newtheorem{definition}{Definition}[section]
\theoremstyle{remark}
\def \N {{\mathbb N}}
\def \Z {{\mathbb Z}}
\def \K {{\mathbb K}}
\def \0 {{\mathbf 0}}
\date{}
\title{A generalization of Selfridge's question}
  \author{Devendra Prasad \\ dp742@snu.edu.in}
\begin{document}

\maketitle

\begin{abstract} Selfridge asked to investigate the pairs $(m,n)$ of natural numbers for which $2^m - 2^n$ divides $x^m - x^n$ for all integers $x.$ This question was answered by different mathematicians by showing that there are only  finitely many such pairs. Let $R$ be the ring of integers of a number field $\K$ and $M_n(R)$ be ring of all $n \times n$ matrices over $R$.    In this article, we prove a generalization of Selfridge's question in the case of $M_n(R)$.
 
\end{abstract}
\textbf{keywords:}  Fixed divisors, Exponential congruences, Polynomials.
\maketitle

\section{Introduction}\label{s1}


This article is devoted to a generalization of the question asked by Selfridge.
Once he observed that $2^2-2$ divides $n^2-n, 2^{2^2}-2^2$ divides $ n^{2^2}-n^2$ and $2^{2^{2^2}}-2^{2^2}$ divides $n^{2^{2^2}}-n^{2^2}$ for all $ n \in \N.$ Motivated by this example he asked the question: for what pairs of natural numbers $m$ and $n,$ $(2^m-2^n)\mid (x^m-x^n)$ for all integers $x?$ When he observed this example and when he asked this question is not known as per our information. This question was published in the book ``Unsolved Problems in Number
Theory" by Richard Guy (see \cite{guy}, problem B47).    In 1974, Ruderman posed a similar problem

\begin{Qu}(Ruderman \cite{ruderman}) Suppose that $ m > n > 0$ are integers such that $2^m - 2^n$ divides $3^m - 3^n$. Show that $2^m - 2^n$ divides $x^m - x^n$ for all natural numbers $x.$
\end{Qu}

This famous question is called `Ruderman's problem` in the literature and is still   open. A positive solution to it will lead to the  answer of Selfridge's question. In 2011, Ram Murty and Kumar Murty \cite{MurtyMurty} proved that there are only finitely many $m$ and $n$ for which the hypothesis in the Question holds. Rundle \cite{rundle} also examined two types of generalizations of the Selfridge's problem. 

Selfridge's problem was answered by Pomerance \cite{pomerance} in 1977 by combining results of Schinzel \cite{schinzelruderman} and Velez \cite{velez}. Q. Sun and M. Zhang \cite{sunzhang} also answered Selfridge's question. Actually, there are fourteen such  pairs which are solution  of Selfridge's question and they are (1,0), (2,1), (3,1), (4,2), (5,1), (5,3), (6,2), (7,3), (8,2), (8,4), (9,3), (14,2), (15,3) and (16,4).

  Once  Selfridge's question is answered completely,  a natural question arises: what happens if we replace `2' by `3' or more generally by some other integer (other than $\pm$ 1). Bose \cite{bose} considered the  question of finding solutions of  $( b^m - b^n) \mid
a^m-a^n\ \forall\ a \in \Z  $, where $m$ and $n$ are positive integers with $m >n$ and $b$ is an integer satisfying $( b^m - b^n) \neq 0$.  He proved  that the congruence has a solution if and only if $b=2$.

Now,   a natural question crops up in our mind: what happens if there are three (or more) terms in the Selfridge's problem? More precisely, what are the tuples $(m_1, \ldots, m_k) $ with positive entries such that for a given polynomial $f(x)= \sum_{i=1}^ka_ix^{m_i} \in \Z[x]$ and given integer $b$, $f(b)$   divides $f(m)\ \forall\ m\ \in \Z,$   under reasonable conditions.

   The arguments used to answer  Selfridge's question were elementary and will  not suffice to answer this question as already pointed out by Bose (see \cite{bose}). However, the notion of the fixed divisor of a polynomial still works  . We first give a general definition of this notion.

\begin{definition}\label{def fix div} Let $A$ be a ring and $f(\underline{x}) \in A[\underline{x}]$ be a polynomial in $n$ variables. Given $\underline{S} \subseteq A^n,$ the fixed divisor of $f$ over $\underline{S},$ denoted by $d(\underline{S},f),$ is defined as the ideal of $A$ generated by the values taken by  $f$ on $\underline{S}.$  \end{definition}
 
 In the case of   $\Z$ or a Unique Factorization domain (UFD), we   manipulate the Definition \ref{def fix div} as follows and  this definition is more useful than
the above definition in this case.
 
 \begin{definition}\label{def fix div ufd}
For a polynomial    $f(x) \in \Z[x]$, its fixed divisor over $\Z$ is  defined as
 
 $$d(\Z,f) = \gcd \{ f(a): a \in \Z \}. $$
 
 \end{definition}
 
 Now we explain how this notion is helpful in the study of Selfridge's question.
 Observe that for a given $a \in \Z \backslash \{\pm 1 \}$,   $a^m - a^n \mid x^m - x^n\ \forall\ x \in \Z$ iff $a^m - a^n \mid d(\Z,f_{m,n}), $ where $f_{m,n} = x^m - x^n$. 
Let $a_1, a_2,\ldots, a_k$ be non-zero elements of $\Z$ and $C$ be set of all polynomials with coefficients $a_1, a_2,\ldots, a_k$, then $\{ d(\Z,g): g \in C\}$ is bounded  by a number which depends on $a_1, a_2,\ldots, a_k$ and is independent to the choice of $m$ and $n$ (for a proof see Vajaitu \cite{vajaituapp1}).

In the case mentioned above, the coefficients are $\pm 1,$ and hence it follows that $d(\Z,f_{m,n}) \leq M$ for some real constant $M$ and hence only finitely many pairs $(m,n)$ are possible such that $a^m - a^n \mid x^m - x^n\ \forall\ x \in \Z.$

 In 1999, Vajaitu \cite{vajaituapp1} generalized Selfridge's problem in a number ring and proved 

\begin{theorem}[Vajaitu and Zaharescu\cite{vajaituapp1}]\label{vajaitu app1} Let $R$ be a number ring of an algebraic number field and let $a_1,a_2,\ldots a_k$ and $b$ be non-zero elements of $R$. Let $b$ be a non-unit, then there are only finitely many $k$ tuples $(n_1,n_2, \ldots, n_k) \in \N^k$ satisfying the following simultaneously

$$   \sum_{i=1}^k a_ib^{n_i} \vert \sum_{i=1}^k a_ix_i^{n_i} \quad \forall\ x \in R $$ and
$$ \sum_{i \in S} a_ib^{n_i} \neq 0\ \forall S \subseteq \lbrace 1,2,\ldots,k \rbrace.$$
\end{theorem}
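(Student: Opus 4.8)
The plan is to translate the first divisibility hypothesis into a statement about the fixed divisor $d(R,f)$ of $f(x)=\sum_{i=1}^k a_i x^{n_i}$ and then to play an upper bound on the size of $d(R,f)$ against a lower bound on the size of $f(b)$. First I would record the elementary but crucial equivalence coming from Definition~\ref{def fix div}: since $f(b)$ is itself one of the values generating $d(R,f)$, one always has $(f(b))\subseteq d(R,f)$, while the hypothesis $f(b)\mid f(x)$ for all $x\in R$ says that every generator $f(x)$ lies in $(f(b))$, i.e. $d(R,f)\subseteq(f(b))$. Hence the first condition holds if and only if $d(R,f)=(f(b))$; in particular the absolute norms satisfy $|N_{\K/\Q}(f(b))|=N(d(R,f))$, and for every prime $\mathfrak p$ of $R$ one has $v_{\mathfrak p}(f(b))=\min_{x\in R}v_{\mathfrak p}(f(x))$. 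I would also apply the second hypothesis $\sum_{i\in S}a_ib^{n_i}\neq 0$ to the blocks $S$ of indices sharing a common exponent, reducing to pairwise distinct exponents $n_1>\cdots>n_k\ge 0$ with nonzero (merged) coefficients $A_j$; after this reduction the second hypothesis reads: no subsum of the terms $a_ib^{n_i}$ vanishes.

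Next I would bound $N(d(R,f))$ from above, subexponentially in $n_1$. Reducing $f$ modulo a prime $\mathfrak q$ with residue field of size $q=N\mathfrak q$, the value $x^{n_i}$ depends only on $n_i\bmod(q-1)$ on $(R/\mathfrak q)^{\times}$, so $\mathfrak q\mid d(R,f)$ forces $\mathfrak q$ to divide each coefficient subsum $r_s=\sum_{n_i\equiv s\,(q-1)}a_i$ (a polynomial of degree $\le q-2$ vanishing on all of $(R/\mathfrak q)^{\times}$ must vanish identically). If such an $r_s$ is nonzero in $R$ it is a nonzero algebraic integer from the finite set of subsums of the $a_i$, so $\mathfrak q$ is pinned to finitely many primes of bounded norm; the only way a large prime can divide $d(R,f)$ is that the residues of the $n_i$ modulo $q-1$ induce a partition all of whose blocks have vanishing coefficient-sum, which forces $(q-1)\mid(n_i-n_j)$ for many pairs. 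Controlling these primes and their valuations by the divisor-function estimate $\sum_{(q-1)\mid D}\log q = D^{o(1)}$, applied with $D$ among the differences $n_i-n_j\le n_1$, I expect $\log N(d(R,f))=o(n_1)$.

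Finally I would bound $|N_{\K/\Q}(f(b))|$ from below. Since $b$ is a non-unit, $|N_{\K/\Q}(b)|\ge 2$, so $\prod_{\sigma}|\sigma(b)|\ge 2$ and some archimedean place satisfies $|\sigma(b)|>1$; the hypothesis that no subsum of the $a_ib^{n_i}$ vanishes makes $f(b)=\sum_j A_jb^{e_j}$ a nondegenerate exponential sum with nonzero leading coefficient, and I would invoke a lower bound of Baker type, equivalently the nondegeneracy theory for sums of $S$-units, to get $\log|N_{\K/\Q}(f(b))|\gg n_1$. Combined with the previous paragraph, the chain $\log|N_{\K/\Q}(f(b))|=\log N(d(R,f))=o(n_1)$ contradicts $\log|N_{\K/\Q}(f(b))|\gg n_1$ as soon as $n_1$ is large; hence $n_1$, and with it the entire tuple, is bounded, leaving only finitely many $(n_1,\dots,n_k)$.

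The main obstacle is this last lower bound. When some subsum of the coefficients $a_i$ vanishes — exactly the configuration of Selfridge's pair $(1,-1)$ — the fixed divisor $d(R,f)$ is genuinely unbounded, so one cannot bound $|N_{\K/\Q}(f(b))|$ by a constant and the whole argument rests on showing that $|N_{\K/\Q}(f(b))|$ nevertheless outgrows $N(d(R,f))$. Making precise the idea that $f(b)$ has a dominant term in the presence of several archimedean embeddings, some possibly with $|\sigma(b)|\le 1$, and ruling out systematic cancellation, is where hypothesis~(2) on non-vanishing subsums becomes indispensable and where the genuine Diophantine input enters; by contrast the reformulation through the fixed divisor and the divisor-sum upper bound are comparatively routine.
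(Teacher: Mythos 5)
The paper does not actually prove this statement; it quotes it from Vajaitu--Zaharescu \cite{vajaituapp1}. The fair comparison is therefore with that original argument, which this article transplants to matrices in Lemma \ref{lem lower bound}, Lemma \ref{lem upper} and Theorem \ref{main theorem}. Your architecture is exactly theirs: reformulate the divisibility as $d(R,f)=(f(b))$, prove an upper bound for $N(d(R,f))$ that is subexponential in $m=\max_i n_i$, prove a lower bound for $|N_{\K/\Q}(f(b))|$ that is exponential in $m$, and compare. Your reformulation is correct, and your mod-$\mathfrak q$ analysis (vanishing on $(R/\mathfrak q)^{\times}$ forces $\mathfrak q$ to divide the coefficient subsums $r_s$, so large primes dividing $d(R,f)$ satisfy $(q-1)\mid(n_i-n_j)$, and the divisor bound gives subexponential growth) is essentially the content of their Proposition 2, the result this paper cites in Lemma \ref{lem upper}; you do gloss over the contribution of higher prime-power valuations $v_{\mathfrak q}(d(R,f))\ge 2$, which is where the real technical work of that proposition lies.

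The genuine divergence, and the weak point, is your lower bound. Where you reach for ``a lower bound of Baker type, equivalently the nondegeneracy theory for sums of $S$-units,'' the original argument is elementary: for each embedding $\sigma$ with $|\sigma(b)|>1$ write $\sigma(f(b))=\sigma(b)^{m}\sum_i\sigma(a_i)\sigma(b)^{n_i-m}$ and show the second factor is bounded below by a positive constant uniformly over admissible tuples by a compactness argument (extract a subsequence along which each exponent gap stabilizes or tends to infinity; the limit is a vanishing subsum, forbidden by hypothesis (2)); this is word-for-word the proof of Lemma \ref{lem lower bound}. Embeddings with $|\sigma(b)|<1$ contribute a uniform positive constant by the same argument. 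Two elementary facts you missed then finish the proof: (i) a non-unit algebraic integer has \emph{no} conjugate on the unit circle (if $|\sigma(b)|=1$ then $1/\sigma(b)=\overline{\sigma(b)}$ is a root of the same minimal polynomial, forcing that polynomial to equal its own reciprocal up to normalization and hence to have constant term $\pm1$), so the troublesome places you worry about in your last paragraph do not exist; and (ii) $\prod_{|\sigma(b)|>1}|\sigma(b)|\ge|N_{\K/\Q}(b)|\ge2$, so the expanding places alone yield $|N_{\K/\Q}(f(b))|\ge c\,2^{m}$. (Fact (i) is precisely why hypothesis (A.2) of Question \ref{main qu} must \emph{assume} no eigenvalue of modulus one in the matrix setting, where the analogue of (i) fails.) As for your proposed tool itself: Baker's theorem concerns linear forms in logarithms and does not directly bound a $k$-term power sum for $k\ge3$, and it is not ``equivalent'' to the $S$-unit theory; what would actually be needed is an Evertse--Schlickewei-type bound on nondegenerate $S$-unit sums, i.e.\ the ineffective subspace theorem. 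Read that charitably, your proof can be completed, so the gap is reparable; but it replaces an effective, elementary argument by ineffective heavy machinery, and the case $|\sigma(b)|=1$ that motivated it is vacuous.
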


 Vajaitu and Zaharescu also strengthened the conclusion of Theorem \ref{vajaitu app1} for the ring of integers in a specific number field.
 
 \begin{theorem}[Vajaitu and Zaharescu\cite{vajaituapp1}]\label{thm:number ring}

 Let $R$ be   the ring of rational integers $\Z$ or the ring of integers in
an imaginary quadratic number field and let $a_1,a_2,\ldots a_k$ and $b$ be non-zero elements of $R$.
Then there are only finitely many elements β in $R$ for which there exist $k$-tuples $(n_1,n_2, \ldots, n_k) \in \N^k$, not all zero, satisfying   the following simultaneously

$$   \sum_{i=1}^k a_ib^{n_i} \vert \sum_{i=1}^k a_ix_i^{n_i} \quad \forall\ x \in R $$ and
$$ \sum_{i \in S} a_ib^{n_i} \neq 0\ \forall S \subseteq \lbrace 1,2,\ldots,k \rbrace.$$
 
 \end{theorem}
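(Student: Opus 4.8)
The plan is to reduce the statement to a bound on the single archimedean absolute value of $\beta$, and then to invoke the discreteness of $R$ in $\C$, which is exactly the point where the hypothesis on $\K$ is needed. Write $f(x_1,\ldots,x_k)=\sum_{i=1}^k a_i x_i^{n_i}$ and $S=S(\beta)=\sum_{i=1}^k a_i\beta^{n_i}$. The divisibility hypothesis says $S\mid f(\underline x)$ for every $\underline x\in R^k$, so the ideal $(S)$ contains each value $(f(\underline x))$ and hence contains the fixed divisor $d(R^k,f)$ of Definition \ref{def fix div}. By the boundedness of fixed divisors recalled in Section \ref{s1} (Vajaitu's estimate, valid over a number ring and already underlying Theorem \ref{vajaitu app1}) there is a constant $M=M(a_1,\ldots,a_k)$, independent of the exponents and of $\beta$, with $N(d(R^k,f))\le M$. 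Since the $a_i$ are non-zero and the tuple is not all zero, $f\not\equiv 0$, so $d(R^k,f)\neq 0$; from $d(R^k,f)\subseteq (S)$ we then get $N((S))\mid N(d(R^k,f))$ and therefore $|N(S)|\le M$.

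First I would isolate the genuine leading term, and this is where the full non-vanishing hypothesis is used. Group the indices by the value of the exponent: let $e_1>e_2>\cdots>e_r\ge 0$ be the distinct values among $n_1,\ldots,n_k$ and set $A_j=\sum_{i:\,n_i=e_j}a_i$, so that $S=\sum_{j=1}^r A_j\beta^{e_j}$. Because the tuple is not all zero, $e_1\ge 1$. Applying the subset hypothesis to $T=\{i:n_i=e_1\}$ gives $A_1\beta^{e_1}=\sum_{i\in T}a_i\beta^{n_i}\neq 0$, hence $A_1\neq 0$: the top term survives and cannot be cancelled by the lower ones. This is the crucial step, since without it the coefficient $A_1$ could vanish, the sum $S$ could fail to grow with $\beta$, and the size estimate below would collapse. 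I also record that $|A_1|\ge 1$, because a non-zero element of $R$ has $|N(\cdot)|\ge 1$, and that $|A_j|\le L:=\sum_{i=1}^k|a_i|$ for every $j$.

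Next I would convert the norm bound into a bound on $|\beta|$. Set $t=|\beta|$ and suppose $t>2L+1$. Since $e_j\le e_1-1$ for $j\ge 2$, the lower-order terms are controlled by a geometric sum, $\sum_{j\ge 2}|A_j|\,t^{e_j}\le L\,t^{e_1}/(t-1)<\tfrac12 t^{e_1}$, so that $|S|\ge |A_1|\,t^{e_1}-\tfrac12 t^{e_1}\ge\tfrac12 t^{e_1}\ge\tfrac12 t$, using $e_1\ge 1$ and $|A_1|\ge 1$. In the imaginary quadratic case $|N(S)|=|S|^2$, whence $M\ge|N(S)|\ge\tfrac14 t^2$; in the case $R=\Z$ one has $|N(S)|=|S|\ge\tfrac12 t$. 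In either case $t=|\beta|\le C$ for a constant $C=C(a_1,\ldots,a_k)$ independent of the exponent tuple. As the complementary range $t\le 2L+1$ is already bounded, every admissible $\beta$ satisfies $|\beta|\le\max(2L+1,C)$.

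Finally I would conclude by discreteness. Under the hypothesis on $\K$, the ring $R$ is a discrete subgroup of $\C$ (a lattice of rank $1$ for $\Z$, of rank $2$ in the imaginary quadratic case), so the disc $\{\beta\in R:|\beta|\le\max(2L+1,C)\}$ is finite; it contains every admissible base, which proves the theorem. The main obstacle is precisely the passage from ``finitely many divisor values $S$'' to ``finitely many bases $\beta$'': the norm bound $|N(S)|\le M$ by itself says nothing about $\beta$, and it converts into a bound on $|\beta|$ only after the subset hypothesis secures a non-vanishing leading coefficient $A_1$. The restriction to $\Z$ or an imaginary quadratic field is essential here, because a single archimedean absolute value determines $\beta$ up to finitely many choices only when there are no independent units; over a general number ring one would merely bound one absolute value of $\beta$ and be left with infinitely many candidates, which is why no such uniform conclusion is claimed in Theorem \ref{vajaitu app1}.
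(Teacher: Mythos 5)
The paper itself states this theorem without proof (it is quoted from Vaj\^aitu--Zaharescu), so your proposal can only be measured against the source and against the paper's proof of the matrix analogue, Theorem \ref{main theorem2}. Your overall architecture is the right one and is essentially theirs: a lower bound for $|S(\beta)|$ that grows with $|\beta|$ (secured by the subset hypothesis, which you correctly use to force $A_1\neq 0$), an upper bound coming from the divisibility hypothesis that is uniform in the exponent tuple, and then discreteness of $R$ in $\C$ --- correctly identified as the only place the restriction to $\Z$ or an imaginary quadratic field enters. But there is one genuine gap: the claim that $N(d)\le M$ with $M$ \emph{independent of the exponents}. In the single-variable form of the theorem (the quantifier is $\forall\, x\in R$; the subscripts on $x_i$ are an artifact of transcription, and Vaj\^aitu--Zaharescu's congruence is $\sum a_i b^{n_i}\mid \sum a_i x^{n_i}$), no such constant exists: already for $a_1=1$, $a_2=-1$ one has $d(\Z,x^m-x)=\prod_{(p-1)\mid(m-1)}p$, which is unbounded as $m$ varies (take $m=\lcm(1,\dots,N)+1$). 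The introduction of this paper does assert such a uniform bound, but that assertion is itself inaccurate; what Vaj\^aitu and Zaharescu actually prove (their Prop.~2, the bound quoted in Lemma \ref{lem upper}) is the subexponential estimate $N(d(R,g))\le c_3N(a_1)^{c_4}\exp\bigl(c_5m^{c_6/\log\log m}\bigr)$. If instead you insist on the multivariate reading $d(R^k,f)$, your constant bound is true but for an elementary reason that has nothing to do with Vaj\^aitu's estimate: differencing in one coordinate shows $a_i(x^{n_i}-y^{n_i})\in d(R^k,f)$, hence $a_i\in d(R^k,f)$ for any $i$ with $n_i\ge 1$, so $N(d)\le N(a_i)$. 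Either way, the sentence as you wrote it cites a statement that is not available.

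The good news is that the gap is repairable inside your own framework, because your leading-term estimate is exponential in the top exponent: you showed $|S|\ge\tfrac12|\beta|^{e_1}$ with $e_1=m$ the maximal exponent, so with $\delta\in\{1,2\}$ the norm inequality reads
\begin{equation*}
\Bigl(\tfrac12\,|\beta|^{m}\Bigr)^{\delta}\;\le\; c_3N(a_1)^{c_4}\exp\bigl(c_5m^{c_6/\log\log m}\bigr),
\end{equation*}
and since $m^{c_6/\log\log m}=m^{o(1)}$, dividing the logarithms by $m$ gives $\log|\beta|\le C$ uniformly in $m$; discreteness then finishes exactly as you said. With that substitution your proof is correct. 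It is also worth noting how it differs from the method this paper uses for the analogous Theorem \ref{main theorem2}: there fixed divisors are bypassed entirely, the polynomial being tested at the points $2^jI$, $0\le j\le k-1$, with a Vandermonde determinant guaranteeing $f(2^jI)\neq 0$ for some $j$; the divisibility then gives $\tfrac12\Vert A_1\Vert\,\Vert B\Vert^{m_1}\le 2^{(k-1)m_1}n^{m_1/2}\sum_i\Vert A_i\Vert$, and since both sides are exponential in $m_1$ the base $\Vert B\Vert$ is bounded directly, with no growth estimate for fixed divisors needed. Your route needs the (sub)exponential fixed-divisor bound as an external input; the test-point route is self-contained, which is what it buys. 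One last cosmetic point: the subset non-vanishing hypothesis should be read over nonempty $S$, as your argument implicitly does.
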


 Theorem \ref{vajaitu app1}   generalizes the question of Selfridge to the case of a number ring. In 2004, Choi and Zaharescu \cite{choi} generalized  Theorem \ref{vajaitu app1} in the case of $n$-variables. 

\begin{theorem}[Choi and Zaharescu \cite{choi}]\label{choi zahe} Let $R$ be the ring of integers in an algebraic number
field and let $b_1,b_2,\ldots b_n$  be non-zero non-unit elements of $R$.  
Let $a_{i_1,\ldots,i_n} \in R\ \forall\ 1 \leq i_1 \leq k_1,\ldots,1 \leq i_n \leq k_n, $ then there are only finitely many $n$ tuples $(\mathbf{m}_1,\mathbf{m}_2, \ldots, \mathbf{m}_n) \in \N^{k_1} \times \N^{k_2}\times \cdots \times \N^{k_n}$ satisfying the following simultaneously
$$   \sum_{i_1=1}^{k_1} \cdots \sum_{i_n=1}^{k_n} a_{i_1,\ldots,i_n} b_1^{m_{1i_1}} \cdots b_n^{m_{ni_n}} \vert \sum_{i_1=1}^{k_1} \cdots \sum_{i_n=1}^{k_n} a_{i_1,\ldots,i_n}x_1^{m_{1i_1}} \cdots x_n^{m_{ni_n}} \forall\   \underline{x} \in R^n $$ where $\mathbf{m}_j = (m_{j1},\ldots m_{jk_j})$ and
$$ \sum_{(i_1, \ldots, i_n) \in S}   a_{i_1,\ldots,i_n} b_1^{m_{1i_1}} \cdots b_n^{m_{ni_n}} \neq 0\  $$
for all non-empty $S \subseteq \lbrace 1,2,\ldots, k_1 \rbrace \times \cdots \times \lbrace 1,2,\ldots,k_n \rbrace $.
\end{theorem}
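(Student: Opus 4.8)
The plan is to convert the divisibility condition into a statement about a fixed divisor, bound that fixed divisor uniformly in the exponents, and then close with the finiteness theorem for $S$-unit equations. Write
$$ f(\underline{x}) = \sum_{i_1=1}^{k_1}\cdots\sum_{i_n=1}^{k_n} a_{i_1,\ldots,i_n}\, x_1^{m_{1i_1}}\cdots x_n^{m_{ni_n}}, \qquad \beta = f(b_1,\ldots,b_n). $$
Taking $S$ to be the full index set, the non-vanishing hypothesis gives $\beta\neq 0$, so $f$ is not the zero polynomial and, $R$ being infinite, $d(R^n,f)\neq(0)$. The hypothesis $\beta\mid f(\underline{x})$ for all $\underline{x}\in R^n$ says exactly, by Definition \ref{def fix div}, that $(\beta)$ contains the fixed divisor ideal $d(R^n,f)$; equivalently $(\beta)$ divides $d(R^n,f)$, whence $|N_{\K/\Q}(\beta)| = \mathrm{Nm}\big((\beta)\big) \le \mathrm{Nm}\big(d(R^n,f)\big)$.

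First I would establish a uniform bound $\mathrm{Nm}\big(d(R^n,f)\big)\le M$, where $M$ depends only on $\K$, on $n$, and on the fixed finite coefficient set $\{a_{i_1,\ldots,i_n}\}$, but \emph{not} on the exponents $m_{ji_j}$. This is the several-variable analogue of the bound quoted for $\Z$ in the introduction, and of the mechanism underlying Theorem \ref{vajaitu app1}. The argument is local: for a prime $\mathfrak{p}$ of $R$ the reduction $x\mapsto x^{m}\bmod\mathfrak{p}^{e}$ depends on $m$ only through its residue modulo the exponent of $(R/\mathfrak{p}^{e})^{*}$ and through the truncation of the $\mathfrak{p}$-adic part, so the values of $f$ modulo a fixed power $\mathfrak{p}^{e}$ fall into boundedly many patterns and $v_{\mathfrak{p}}\big(d(R^n,f)\big)$ is bounded independently of the exponents. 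Moreover only the finitely many primes $\mathfrak{p}$ of small norm relative to the coefficient data can divide $d(R^n,f)$ at all, since for large $\mathfrak{p}$ one produces $\underline{x}$ with $f(\underline{x})\not\equiv 0\pmod{\mathfrak{p}}$. Combining these yields $\mathrm{Nm}(d(R^n,f))\le M$, hence $|N_{\K/\Q}(\beta)|\le M$; in particular $(\beta)$ is one of the finitely many ideals of $R$ of norm at most $M$.

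The final step rewrites the surviving relation as an $S$-unit equation. Let $S$ be the finite set of primes dividing $b_1\cdots b_n$ together with the primes occurring in the finitely many possible ideals $(\beta)$. Each monomial $\mu_{i_1,\ldots,i_n}=b_1^{m_{1i_1}}\cdots b_n^{m_{ni_n}}$ is an $S$-unit, and fixing $(\beta)=\mathfrak{n}$ we may write $\beta=u\,n_0$ with $n_0$ a fixed generator and $u$ a unit, so that $\sum_{i_1,\ldots,i_n}a_{i_1,\ldots,i_n}\,\mu_{i_1,\ldots,i_n}-u\,n_0=0$ is a linear relation in $S$-units with fixed nonzero coefficients. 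The non-degeneracy hypothesis is precisely what makes this relation non-degenerate: any proper vanishing subsum either is itself a subsum $\sum_{(i_1,\ldots,i_n)\in S'}a_{i_1,\ldots,i_n}\mu_{i_1,\ldots,i_n}=0$, contradicting the hypothesis, or forces its complementary subsum to vanish, giving the same contradiction. Dividing through by one monomial to normalize, the classical finiteness theorem for non-degenerate $S$-unit equations then leaves only finitely many possibilities for the ratios $\mu_{i_1,\ldots,i_n}/\mu_{i_1',\ldots,i_n'}$; combined with the finitely many choices of $(\beta)$ this confines each ideal $(\mu_{i_1,\ldots,i_n})=\prod_j (b_j)^{m_{ji_j}}$ to a finite set. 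Since every $b_j$ is a non-unit, it is divisible by some prime $\mathfrak{p}_j$ with $v_{\mathfrak{p}_j}(b_j)\ge 1$, and as the exponents are non-negative the bound on $\sum_j m_{ji_j}v_{\mathfrak{p}_j}(b_j)$ bounds each $m_{ji_j}$ individually. Thus only finitely many tuples $(\mathbf{m}_1,\ldots,\mathbf{m}_n)$ occur.

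The main obstacle I anticipate is the uniform fixed-divisor bound of the second step: one must show that moving from a single base (Theorem \ref{vajaitu app1}) to several bases does not allow $\mathrm{Nm}(d(R^n,f))$ to grow with the exponents, which requires the stabilization analysis modulo $\mathfrak{p}^{e}$ to be carried out simultaneously in all $n$ coordinates and uniformly over the finitely many primes that can divide the fixed divisor. Once this bound is secured, the reduction to a non-degenerate $S$-unit equation and the appeal to its finiteness are comparatively routine, the matching of the hypothesis to $S$-unit non-degeneracy being the pleasant point where the two parts fit together.
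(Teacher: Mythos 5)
First, a point of comparison: the paper itself does not prove this statement --- it is quoted from Choi and Zaharescu \cite{choi} --- so your attempt can only be judged on its own terms and against the mechanism the paper uses for its own analogous result (Theorem \ref{main theorem}). Your first step is sound: the divisibility hypothesis is equivalent to $d(R^n,f)\subseteq(\beta)$, hence $N((\beta))\le N(d(R^n,f))$. Your last step (the $S$-unit reduction, including the matching of the non-vanishing-subsum hypothesis to non-degeneracy) would also work \emph{if} $(\beta)$ were confined to finitely many ideals. The genuine gap is the second step: the claimed uniform bound $N(d(R^n,f))\le M$ with $M$ independent of the exponents is false. Take $R=\Z$, $n=1$, coefficients $1,-1$, and $f=x^{k!+1}-x$. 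For every prime $p\le k+1$ we have $(p-1)\mid k!$, so Fermat's little theorem gives $p\mid a^{k!+1}-a$ for every integer $a$; hence $\prod_{p\le k+1}p$ divides $d(\Z,f)$, and $N(d(\Z,f))\to\infty$ as $k\to\infty$. The same example defeats your local argument: for these large primes $p$ one \emph{cannot} produce $x$ with $f(x)\not\equiv 0\pmod p$, because after reducing the exponents modulo $p-1$ the two monomials collide and their coefficients cancel, so $f$ is the zero function on $\F_p$. (You were likely misled by the paper's introduction, which does assert such a uniform bound; that assertion is an overstatement of V\^aj\^aitu--Zaharescu, whose actual Proposition 2 --- quoted correctly in Lemma \ref{lem upper} --- gives only the subexponential bound $N(d(R,g))\le c_3N(a_1)^{c_4}\exp\bigl(c_5m^{c_6/\log\log m}\bigr)$.)

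Because the fixed divisor can grow with the exponents, $(\beta)$ is not restricted to finitely many ideals, no fixed finite prime set $S$ exists, and the $S$-unit step cannot get started; so the proof as written does not close. The repair is the route the cited authors take, and which this paper mirrors in the matrix setting: pair the subexponential \emph{upper} bound on $N(d(R^n,f))$ with an exponential \emph{lower} bound $N((\beta))\ge c\,|d|^{m}$ ($|d|>1$, $m$ the largest exponent), proved from the non-vanishing-subsum hypothesis together with the non-unit hypothesis on the $b_j$ --- this is exactly the role of Lemma \ref{lem lower bound} and the proposition following it. Comparing the two growth rates bounds $m$ directly, and no appeal to $S$-unit equations is needed. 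In short: your outline replaces the one estimate that genuinely requires work (the exponential lower bound on $N((\beta))$) with a uniform fixed-divisor bound that is simply not true.
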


  Choi and Zaharescu also generalized Theorem \ref{thm:number ring} in this setting.
We write the statement for the sake of completeness.

\begin{theorem}[Choi and Zaharescu \cite{choi}]\label{choi zahe1} Let $R$  be the ring of rational integers $\Z$ or the ring of integers in
an imaginary quadratic number field.  Fix $n $   and choose nonzero elements  $a_{i_1,\ldots,i_n} \in R\ \forall\ 1 \leq i_1 \leq k_1,\ldots,1 \leq i_n \leq k_n, $. Then there
are only finitely many $n$-tuples $(b_1,b_2,\ldots b_n)$ with $b_j \in R, j = 1, \ldots , n$
for which there exists $(\mathbf{m}_1,\mathbf{m}_2, \ldots, \mathbf{m}_n) \in \N^{k_1} \times \N^{k_2}\times \cdots \times \N^{k_n}$ with none of the tuples $(\mathbf{m}_1,\mathbf{m}_2, \ldots, \mathbf{m}_n)$   having all the components equal to zero  satisfying the following  simultaneously
  
$$   \sum_{i_1=1}^{k_1} \cdots \sum_{i_n=1}^{k_n} a_{i_1,\ldots,i_n} b_1^{m_{1i_1}} \cdots b_n^{m_{ni_n}} \vert \sum_{i_1=1}^{k_1} \cdots \sum_{i_n=1}^{k_n} a_{i_1,\ldots,i_n}x_1^{m_{1i_1}} \cdots x_n^{m_{ni_n}} \forall\   \underline{x} \in R^n $$ where $\mathbf{m}_j = (m_{j1},\ldots m_{jk_j})$ and
$$ \sum_{(i_1, \ldots, i_n) \in S}   a_{i_1,\ldots,i_n} b_1^{m_{1i_1}} \cdots b_n^{m_{ni_n}} \neq 0\  $$
for all non-empty $S \subseteq \lbrace 1,2,\ldots, k_1 \rbrace \times \cdots \times \lbrace 1,2,\ldots,k_n \rbrace $.
\end{theorem}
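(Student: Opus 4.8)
The plan is to convert the divisibility hypothesis into a bound for a single archimedean absolute value, and then to show that such a bound confines the tuple $(b_1,\dots,b_n)$ to a bounded region of the lattice $R^n$. Write $F_{\mathbf m}(\underline{x})=\sum_{i_1,\dots,i_n}a_{i_1,\dots,i_n}x_1^{m_{1i_1}}\cdots x_n^{m_{ni_n}}$ and set $\gamma=F_{\mathbf m}(\underline{b})$. Since $\gamma$ is itself a value of $F_{\mathbf m}$ and, by hypothesis, divides every value $F_{\mathbf m}(\underline{x})$, the principal ideal $(\gamma)$ equals the fixed divisor $d(R^n,F_{\mathbf m})$. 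First I would invoke the boundedness of fixed divisors over a fixed coefficient set (Vajaitu \cite{vajaituapp1}, whose multivariable form underlies Theorem \ref{choi zahe}): there is a constant $M$, depending only on the $a_{i_1,\dots,i_n}$ and on $n$, with $|N(d(R^n,F_{\mathbf m}))|\le M$ for all exponents, so $|N(\gamma)|\le M$. As $R$ is $\Z$ or the ring of integers of an imaginary quadratic field, it has a single archimedean place: fixing an embedding $\sigma\colon\K\hookrightarrow\C$ gives $|N(\gamma)|=|\sigma(\gamma)|^{[\K:\Q]}$, whence $|\sigma(\gamma)|\le M':=M^{1/[\K:\Q]}$. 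The non-vanishing hypothesis forces $\gamma\ne 0$, so $|\sigma(\gamma)|\ge 1$. The problem is thus reduced to the growth statement: the set of $\underline{b}\in R^n$ admitting some exponent tuple (with all prescribed partial sums nonzero and each $E_j\ge1$) for which $|\sigma(F_{\mathbf m}(\underline{b}))|\le M'$ is finite.

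For the growth statement I would first normalise $F_{\mathbf m}$. Collecting terms sharing a common exponent vector $\underline v=(m_{1i_1},\dots,m_{ni_n})$, I write $F_{\mathbf m}(\underline b)=\sum_{\underline v\in V}A_{\underline v}\,b_1^{v_1}\cdots b_n^{v_n}$, where $V$ is the set of exponent vectors and $A_{\underline v}\in R$ is the sum of the coefficients contributing to $\underline v$. Because every $a_{i_1,\dots,i_n}$ is nonzero, the support $V$ is the full product $V_1\times\cdots\times V_n$ of the exponent sets $V_j$ occurring in each variable. Applying the non-vanishing hypothesis to the block of indices realising a prescribed exponent vector shows that each relevant aggregate $A_{\underline v}$ is a nonzero element of $R$, hence $|\sigma(A_{\underline v})|\ge 1$. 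The product shape of $V$ is decisive: writing $E_j=\max V_j\ge 1$, lowering the $j$-th exponent of a monomial multiplies its modulus by $|\sigma(b_j)|^{-(E_j-v_j)}$, so as soon as all $|\sigma(b_j)|>1$ the top monomial $b_1^{E_1}\cdots b_n^{E_n}$ strictly dominates. This is the mechanism I would exploit to force $|\sigma(\gamma)|$ to grow with the $|\sigma(b_j)|$, its leading coefficient being a nonzero algebraic integer by the non-vanishing hypothesis.

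The main obstacle is the mixed-magnitude regime, where several coordinates are large but of very different sizes, so that no single monomial dominates and a crude triangle-inequality estimate loses too much. I would organise this as an induction on $n$, carrying the stronger statement that there are constants $c>0$ and $B$, depending only on the coefficients and on $n$, such that for every admissible configuration either $\max_j|\sigma(b_j)|\le B$ or $|\sigma(\gamma)|\ge c\prod_{j}|\sigma(b_j)|^{E_j}$. For the inductive step I would single out the coordinate $j_0$ of maximal modulus and expand $F_{\mathbf m}$ as the one-variable polynomial $g(t)=\sum_{e\in V_{j_0}}Q_e(\underline{b}^{(j_0)})\,t^{e}$, whose coefficients $Q_e$ are admissible tensor polynomials in the remaining $n-1$ variables inheriting the non-vanishing conditions. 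The triangle inequality bounds each $|\sigma(Q_e(\underline{b}^{(j_0)}))|$ above by $\prod_{j\ne j_0}|\sigma(b_j)|^{E_j}$ times a constant, while the inductive hypothesis applied to the leading coefficient $Q_{E_{j_0}}$ bounds it below by $c'\prod_{j\ne j_0}|\sigma(b_j)|^{E_j}$ outside a bounded region; since $|\sigma(b_{j_0})|$ is the largest modulus, the leading term $Q_{E_{j_0}}\sigma(b_{j_0})^{E_{j_0}}$ then dominates once $|\sigma(b_{j_0})|$ is large, giving the asserted lower bound on $|\sigma(\gamma)|$. The delicate point, where I expect to spend the most care, is keeping all constants uniform in the exponent tuple $\mathbf m$: this is possible because the aggregated coefficients range only over the finite set of nonempty subset-sums of the $a_{i_1,\dots,i_n}$, and because the unit coordinates (those with $|\sigma(b_j)|=1$) contribute powers of modulus exactly $1$, so the finiteness of the unit group of $R$ keeps their effect bounded independently of the exponents.

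Combining the two regimes yields a constant $B$, depending only on the coefficients and on $n$, with $|\sigma(b_j)|\le B$ for all $j$ in every admissible tuple, since $\prod_j|\sigma(b_j)|^{E_j}\ge\max_j|\sigma(b_j)|$ cannot then stay below $M'/c$. As $\sigma$ embeds $R$ as a discrete subring of $\C$ (a full lattice in the imaginary quadratic case), the disc of radius $B$ contains only finitely many images of elements of $R$; hence there are only finitely many admissible tuples $(b_1,\dots,b_n)$, which is the assertion of the theorem.
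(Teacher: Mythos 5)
The paper itself offers no proof of this statement: it is quoted verbatim from Choi and Zaharescu for completeness, and the closest argument in the paper is the proof of its matrix analogue, Theorem \ref{main theorem2}. Judged on its own merits, your proposal has a fatal gap at its very first step: the claim that there is a constant $M$, depending only on the coefficients $a_{i_1,\ldots,i_n}$ and $n$, with $N(d(R^n,F_{\mathbf m}))\le M$ for \emph{all} exponent tuples. This is false already for $n=1$, $k=2$, coefficients $1,-1$: the fixed divisor of $x^m-x$ over $\Z$ contains $m$ whenever $m$ is prime (Fermat's little theorem), indeed equals $\prod_{(q-1)\mid (m-1)}q$, so these fixed divisors are unbounded as the exponent grows. (The introduction of this paper does assert such a uniform bound, so you may have been misled by it, but that assertion is itself erroneous.) What Vajaitu--Zaharescu actually prove, and what this paper quotes correctly in Lemma \ref{lem upper}, is only the subexponential bound $N(d(R,g))\le c_3N(a_1)^{c_4}\exp\bigl(c_5m^{c_6/\log\log m}\bigr)$, which grows with the maximal exponent $m$.

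This is not a repairable slip inside your framework. If you replace $M$ by the true bound, your threshold $M'$ depends on $\mathbf m$, and your growth statement cannot beat it: the lower bound $c\prod_j|\sigma(b_j)|^{E_j}$ grows with the $|\sigma(b_j)|$ but is completely insensitive to exponents carried by coordinates of modulus one. If some $b_{j'}$ is a unit and the maximal exponent $m=E_{j'}$ sits on that coordinate, the left side stays fixed while the right side $\exp\bigl(c_5m^{c_6/\log\log m}\bigr)$ tends to infinity, so no bound on the other $|\sigma(b_j)|$ uniform in $\mathbf m$ can be extracted, and finiteness of the $\underline b$'s does not follow. The mechanism that actually works --- and is exactly what the proof of Theorem \ref{main theorem2} in this paper uses --- avoids the fixed divisor bound altogether: a Vandermonde determinant argument shows that $F_{\mathbf m}$ cannot vanish at all points with coordinates that are powers of $2$, the divisibility hypothesis then gives $|\sigma(F_{\mathbf m}(\underline b))|\le|\sigma(F_{\mathbf m}(\text{such a point}))|$ (valid because in $\Z$ or an imaginary quadratic ring the ideal norm is a power of the single archimedean absolute value), and the crucial feature is that this upper bound is exponential \emph{in the same exponents} as the lower bound $\tfrac12\Vert A_1\Vert\,|\sigma(b)|^{m_1}$, with a fixed base depending only on $k$; comparing bases rather than values yields a bound on $\max_j|\sigma(b_j)|$ that is uniform in $\mathbf m$. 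Your final step (discreteness of $\sigma(R)$ in $\C$, hence finiteness in a disc) is correct, but the reduction to it is where the proposal fails.
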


In this article, we consider a generalization of Theorem \ref{vajaitu app1}, Theorem \ref{thm:number ring}  Theorem \ref{choi zahe}   and Theorem \ref{choi zahe1}  in the case when the ring under consideration is that of    $n \times n$ matrices over a number ring. We denote by $M_n(A)$, the ring of $n \times n$ matrices over the given ring $A$. We   use the fixed divisor of a polynomial as our tool in the generalization. In the case of the ring of the matrices over a ring, a reasonable definition of the fixed divisor of a polynomial is suggested by Prasad, Rajkumar and Reddy (\cite{prasadsurvey}, Sec. 7) with suitable justification.

 

 \begin{definition} For a polynomial $f \in M_n(A)[x],$ 
 its   fixed divisor over $M_n(A)$ (or $d(M_n(A),f) $) is defined as the ideal in $ A$ generated by all the entries of    $f(C)\ \forall\ C \in M_n(A). $ 
 \end{definition}

Observe that here the fixed divisor is not an ideal of the ring  $M_n(A)$ as usual. This definition is helpful in the study of fixed divisors and related topics.  For a given matrix $M \in M_n(\K)$ for any number field $\K$, recall that the norm  
 $$  \Vert M \Vert  = (\sum_{i,j}^n \mid m_{ij} \mid^2)^{\tfrac{1}{2}}, $$
 makes the space $(M_n(\K), \Vert \cdot \Vert)$ a Banach algebra.
We suggest the following generalization of Selfridge's question.
\begin{Qu}\label{main qu}   Let $A_1,A_2,\ldots,A_k,B$ be non-zero matrices in $M_n(R)$ and $B$ satisfies the following
\begin{itemize}

\item[(A.1)] The ideal generated by    $B$  
is not the whole ring.

\item[(A.2)]  All the  eigenvalues of $B^*B$ are non-zero and      their modulii   is either strictly  less than $1$  or  strictly   greater than $1$. Here $B^*$ is  the conjugate transpose of  $B$.


\end{itemize}
Then, for how many tuples     $ (m_1,m_2,\ldots, m_k) \in \N^k $, the following are satisfied simultaneously
\begin{enumerate}
\item[(B.1)]     $\Sigma_{i \in S} A_iB^{m_i} \neq 0\ \forall\ S \subseteq \{1,2,\ldots,k \} ,$  
\item[(B.2)]  the ideal generated by  $\sum_{i =1}^k A_iB^{m_i}   $ contains the ideal generated by  $\{ \sum_{i =1}^k A_iC^{m_i} : C \in M_n(R) \}? $
\end{enumerate}

\end{Qu}

We know that each ideal of $M_n(A)$ is of the form  $M_n(I)$  for some ideal $I \subseteq A$. Also, for each $I \subseteq A$,   $M_n(I)$ is an     ideal of $M_n(A)$.  For given ideals $I$ and $J$ of $A$, the condition $M_n(I) \subseteq M_n(J)$ is equivalent to saying that $I \subseteq J.$ For a matrix $M \in M_n(A)$ we denote by $I_M$, the ideal generated by all entries of $M$ in $A$.  Hence, we have to find the number of tuples $ (m_1,m_2,\ldots, m_k) \in \N^k $, for which $I_{f(B)} \supseteq d(M_n(A),f)$ where $f= \sum_{i =1}^k A_ix^{m_i}.$

\medskip

The structure of the paper is as follows. In Section \ref{secction bounds} we give bounds for fixed divisors by using combining the arguements of Vajaitu and Zaharescu and fixed divisors. Indeed, our work is motivated by the work of Vajaitu and Zaharescu. In Section \ref{section generalization} we answer our question by proving Theorem \ref{main theorem}. Finally, in Section \ref{secction further} we suggest further generalization of our theorems in the case of several variables when the underlying ring is still $M_n(R)$.

  



  

\section{Bounds for fixed divisors}\label{secction bounds}

 We fix the notations for the whole paper. Let $\N$ denote the set of natural numbers as usual.  For a given tuple $\textbf{m}= (m_1, m_2,\ldots,$ $m_k) \in \N^k$, $m$   denotes the maximum of $m_i$ where $i=1,2,\ldots,k$. $R$ denotes a number ring and norm of an ideal $I\subseteq R$, is denoted by $N(I)$ and is the cardinality of the residue class ring $R/I$. Norm of an element is the norm of the ideal generated by the element.  

 In order to prove our main theorem we need several lemmas.   With all the notations as in Question \ref{main qu}, we prove the following lemma, in which we consider the case when modulus of each eigenvalue of $ B^*B $ is  strictly  less than one. The other case can be handled by considering  considering $B^{-1}$.

   \begin{lemma}\label{lem lower bound} Let $\mathbf{m}=(m_1, \ldots,m_k) \in \N^k$ and $m$ be the supremum of the components of $\mathbf{m}$. Then there exist constants $c$ and $d$  independent to $\textbf{m}$ such that 
   $$\Vert \sum_{i=1}^k A_iB^{m_i} \Vert \geq c\mid d \mid ^m.$$
   
   \end{lemma}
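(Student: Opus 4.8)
The plan is to establish the stated (contracting) case, in which every eigenvalue of $B^*B$ has modulus strictly less than $1$; equivalently the singular values $\sigma_1\geq\cdots\geq\sigma_n$ of $B$ all lie in $(0,1)$ and $B$ is invertible. I would take $|d|=\sqrt{\lambda_{\min}(B^*B)}=\sigma_n\in(0,1)$, which depends only on $B$, not on $\mathbf{m}$. Two elementary estimates for the norm $\Vert\cdot\Vert$ will be used repeatedly: submultiplicativity against the operator norm, $\Vert XB^t\Vert\leq\sigma_1^{\,t}\Vert X\Vert$, and the companion lower bound $\Vert XB^t\Vert\geq\sigma_n^{\,t}\Vert X\Vert$. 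The latter follows from $\Vert XB^t\Vert^2=\operatorname{tr}\bigl(X^*X\,B^t(B^t)^*\bigr)\geq \sigma_n^{2t}\operatorname{tr}(X^*X)=\sigma_n^{2t}\Vert X\Vert^2$, since $B^t(B^t)^*-\sigma_n^{2t}I$ and $X^*X$ are both positive semidefinite and the trace of a product of positive semidefinite matrices is nonnegative.

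First I would collect equal exponents. Writing $\mu=\min_i m_i$ and letting $e_1<\cdots<e_r$ be the distinct values among the $m_i$, with $e_1=\mu$ and $e_r=m$, set $C_j=\sum_{i:\,m_i=e_j}A_i$, so that $\sum_{i=1}^k A_iB^{m_i}=\bigl(\sum_{j=1}^r C_jB^{\,e_j-\mu}\bigr)B^{\mu}=:PB^{\mu}$. Applying (B.1) to the set $S=\{i:m_i=e_j\}$ gives $C_jB^{e_j}\neq0$, hence $C_j\neq0$ as $B$ is invertible; moreover each $C_j$ is one of the finitely many subset-sums of $A_1,\dots,A_k$, so there are $\mathbf{m}$-independent constants $0<c_0\leq\Vert C_j\Vert\leq K$, where $c_0$ is the least norm among the nonzero subset-sums. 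Since $P$ has constant term $C_1\neq0$, the case $t=\mu$ of the lower estimate gives $\Vert\sum_i A_iB^{m_i}\Vert=\Vert PB^{\mu}\Vert\geq\sigma_n^{\mu}\Vert P\Vert$, and because $\sigma_n<1$ and $\mu\leq m$ we have $\sigma_n^{\mu}\geq\sigma_n^{m}$. Thus it suffices to produce a lower bound $\Vert P\Vert\geq c'>0$ independent of $\mathbf{m}$, and the lemma follows with $c=c'$ and $d=\sigma_n$.

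The heart of the argument, and the step I expect to be the main obstacle, is the uniform bound $\Vert P\Vert\geq c'$: a priori the summands $C_jB^{\,e_j-\mu}$ may cancel, and since the entries are algebraic integers that may be small at the chosen archimedean place, non-vanishing of $P$ alone is not enough. I would control this by induction on the number $r$ of distinct exponents, splitting at a large gap. Fix a threshold $N_0$ and inspect the gaps $e_{j+1}-e_j$. If every gap is smaller than $N_0$, then the spread $e_r-e_1$ is at most $(k-1)N_0$, so after dividing out $B^{\mu}$ only finitely many exponent configurations occur; each resulting $P=\bigl(\sum_i A_iB^{m_i}\bigr)B^{-\mu}$ is nonzero by (B.1) applied to the full index set, so the minimum of $\Vert P\Vert$ over this finite list is a positive constant. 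If instead some gap satisfies $e_{t+1}-e_t\geq N_0$, write $P=L+H$ with $L=\sum_{j\leq t}C_jB^{\,e_j-\mu}$ and $H=\sum_{j>t}C_jB^{\,e_j-\mu}$. Here $L$ is again a legitimate configuration with fewer distinct exponents, all of whose sub-sums are nonzero by (B.1), so $\Vert L\Vert\geq c_{r-1}$ by induction, while factoring $B^{\,e_{t+1}-\mu}$ out of $H$ and using $e_{t+1}-\mu\geq e_{t+1}-e_t\geq N_0$ gives $\Vert H\Vert\leq K'\sigma_1^{\,N_0}$ for an $\mathbf{m}$-independent constant $K'$. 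Choosing $N_0$ so that $K'\sigma_1^{\,N_0}\leq c_{r-1}/2$ yields $\Vert P\Vert\geq\Vert L\Vert-\Vert H\Vert\geq c_{r-1}/2$; taking $c'$ to be the minimum of the two positive constants produced closes the induction.

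Finally, the complementary case in which every eigenvalue of $B^*B$ has modulus greater than $1$ is reduced to the above by passing to $B^{-1}$, which is then a contraction. Using $\sum_i A_iB^{m_i}=\bigl(\sum_i A_i(B^{-1})^{\,m-m_i}\bigr)B^{m}$, the bracket is a sum with nonzero constant term $\sum_{i:m_i=m}A_i$ (nonzero by (B.1)) and coefficients from the finite family of subset-sums, so the uniform lower bound of the previous paragraph applies to it verbatim; the trailing factor $B^{m}$ then contributes $\sigma_n^{\,m}$ via $\Vert XB^{m}\Vert\geq\sigma_n^{\,m}\Vert X\Vert$, and now $\sigma_n=\sqrt{\lambda_{\min}(B^*B)}>1$ supplies genuine growth. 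The only genuinely delicate point throughout is the uniform-in-$\mathbf{m}$ estimate of the third paragraph; the reverse-triangle bound there must be arranged so that the contraction factor $\sigma_1^{\,N_0}$ beats the inductively fixed constant $c_{r-1}$, which is precisely why the split is performed at a gap that is large rather than at a fixed location.
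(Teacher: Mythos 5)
Your proof is correct, but it establishes the key step --- the uniform lower bound on the normalized sum --- by a genuinely different mechanism than the paper. Both arguments first strip a power of $B$ off the sum (you factor out $B^{\mu}$, $\mu=\min_i m_i$, in the contracting case and $B^{m}$ in the expanding case; the paper always factors out $B^{m}$), reduce the lemma to showing $\Vert P\Vert\geq c'$ uniformly in $\mathbf{m}$, and then reinstate the exponential factor, the paper via submultiplicativity $\Vert XY\Vert\leq\Vert X\Vert\,\Vert Y\Vert$ and you via the sharper singular-value estimate $\Vert XB^{t}\Vert\geq\sigma_n^{t}\Vert X\Vert$. The difference lies in how the uniform bound is obtained. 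The paper argues by contradiction and compactness: if the infimum were zero, it extracts a subsequence of exponent tuples along which each exponent either stabilizes or tends to infinity; the divergent terms vanish because the relevant matrix is a contraction, forcing a fixed sub-sum $\sum_{i\in A}A_iB^{-b_i}$ to be zero, contradicting (B.1). You instead run a quantitative induction on the number of distinct exponents, splitting at a large gap: if all gaps are below a threshold $N_0$ only finitely many configurations occur (each nonzero by (B.1)), while a gap of length at least $N_0$ lets the factor $\sigma_1^{N_0}$ beat the inductively fixed constant via the reverse triangle inequality. Your route is longer but constructive: it yields explicit constants ($d=\sigma_n$, the smallest singular value of $B$) where the paper's compactness argument produces an unspecified $c$. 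A further point in your favor: your normalization is matched correctly to each case, whereas the paper announces it treats the case where all eigenvalues of $B^*B$ have modulus less than $1$ and then uses the estimate $\Vert B^{-n_{i,r}}\Vert\leq|\lambda|^{n_{i,r}}$ with $|\lambda|<1$, which is valid only when $B^{-1}$, not $B$, is the contraction (i.e., in the opposite case); your version, which factors out the minimal exponent precisely so that only nonnegative powers of the contracting matrix appear, avoids this slip.
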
 
   
   \begin{proof}
   We claim that $\Vert \sum_{i=1}^k A_iB^{m_i-m} \Vert   \geq c$, for a constant $c$.  Denoting  the difference $m-m_i$ by $n_i$ for $i=1,2,\ldots,k$, we have to show  $\Vert \sum_{i=1}^k A_iB^{-n_i} \Vert \geq c$ If this is false then there would exist a sequence $(n_{1,r}, n_{2,r}, \ldots, n_{k,r})$ of natural numbers with min $\lbrace n_{1,r}, n_{2,r}, \ldots, n_{k,r}\rbrace =0$ for each $r$, such that when $r$ tends to infinity
 $\Vert \sum_{i=1}^k A_iB^{-n_{i,r}} \Vert$ tends to zero. Let $A \subseteq \{ 1,2,\ldots,k \}$ be the largest subset such that for  each $r \in A$ there   exist a natural number $b_r $ and an infinite  sequence   $M$ such that $n_{i,r} =b_r$ for each $r \in A$ and $i \in M$. 
 Then, we    have the following inequality
 
 \begin{equation}\label{eqn ineq}
   \Vert \sum_{i=1}^k A_iB^{-n_{i,r}} \Vert   \leq  \Vert \sum_{i \in A}  A_i   B^{-b_{i}} \Vert +  \Vert \sum_{i \in A^c}  A_i   B^{-n_{i,r}} \Vert          .  
   \end{equation}
 
Here $A^c$ denotes the complement of $A$ with respect to the set $\{1,2,\ldots,k \}$. Now we recall that for any matrix $M \in M_n(\K),  \Vert M \Vert \leq \sqrt{n} \vert \lambda \vert,$ where $\lambda$ is the  eigenvalue of  $M^*M$ with maximum modulus. Hence, the second term in the Eq. (\ref{eqn ineq}) becomes
 
 $$  \Vert \sum_{i \in A^c}  A_i   B^{-n_{i,r}} \Vert   \leq  \sum_{i \in A^c} \Vert   A_i \Vert \Vert   B^{-n_{i,r}} \Vert \leq  \sum_{i \in A^c} \Vert   A_i \Vert \vert   \lambda \vert^{n_{i,r}} ,$$
 where $\lambda$ is an eigenvalue of $B^*B$ with maximum modulus. By assumption $\vert   \lambda  \vert <1, $   $  \Vert \sum_{i \in A^c}  A_i   B^{-n_{i,r}} \Vert$ tends to zero as $r$ tends to infinity. Now we rewrite the  the left hand side   of Eq. (\ref{eqn ineq}) as

 $$\Vert \sum_{i \in A}  A_i   B^{-b_{i}}  +   \sum_{i \in A^c}  A_i   B^{-n_{i,r}} \Vert, $$
  which   tends to zero as  $r$ tends to infinity. Observe that $  \Vert \sum_{i \in A^c}  A_i   B^{-n_{i,r}} \Vert$  also tends to zero as $r$ tends to infinity leading to the conclusion that $$  \Vert \sum_{i \in A}  A_i   B^{-b_{i}} \Vert \rightarrow\ 0  .$$

 This implies that $\sum_{i \in A}  A_i   B^{-b_{i}}=0$ , which is a contradiction to B.1 of the Question \ref{main qu}
 

Hence,  $\Vert \sum_{i=1}^k A_iB^{m_i-m} \Vert \geq c$ and using the fact that $\Vert XY \Vert \leq \Vert X \Vert \Vert Y \Vert\ \forall\ X,$  $ Y \in M_n(R)$ we get the desired result.
 
   \end{proof}
    
  For a matrix $M=[m_{ij}]_{n \times n}$, define $\sigma (M)=  [\sigma( m_{ij})]_{n \times n}$ for a given automorphism $\sigma$ of $\K$. Then it can be seen that $\sigma(\Vert M \Vert^2)= \Vert \sigma( M ) \Vert^2$.
In the above lemma, taking product over all the  conjugates of $\Vert f(B) \Vert$  we get $N(\Vert f(B) \Vert^2) \geq c'\vert d'\vert^m$, where  $c'$ and $d'$ are new constants.   It can be seen that $\Vert f(B) \Vert^2  \in I_{f(B)}$  so there exist an ideal $J\subseteq R$ such that  $I_{f(B)} J = (\Vert f(B) \Vert^2)$. Hence, $N(I_{f(B)} J)= N(\Vert f(B) \Vert^2) = N(I_{f(B)})N(J)$. Now we prove that $N(J)$ is also bounded above.

 \begin{lemma} Let $J$ be the ideal such that $I_{f(B)} J = (\Vert f(B) \Vert^2)$. Then there exist constants $c'$ and $d'$ not depending on $\textbf{m}$  such that $N(J) \leq c' \mid d' \mid^m.$
 
   \end{lemma}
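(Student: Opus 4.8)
The plan is to read off the upper bound on $N(J)$ from the multiplicativity of the ideal norm, reducing everything to an elementary upper estimate for the norm of the principal ideal $(\Vert f(B)\Vert^2)$, where $f = \sum_{i=1}^k A_i x^{m_i}$. The key observation is that we want an \emph{upper} bound on $N(J)$, so in contrast to the discussion preceding the lemma (which produced a \emph{lower} bound on $N(\Vert f(B)\Vert^2)$ via Lemma \ref{lem lower bound}), here I only need crude upper estimates for the conjugate norms of $f(B)$, and these require no case distinction.

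First I would note that by B.1 (taking $S = \{1,2,\ldots,k\}$) the matrix $f(B) = \sum_{i=1}^k A_i B^{m_i}$ is nonzero, so $I_{f(B)}$ is a nonzero ideal of $R$ and hence $N(I_{f(B)}) \geq 1$. Since $I_{f(B)} J = (\Vert f(B)\Vert^2)$ and the ideal norm is multiplicative on the number ring $R$, we get $N((\Vert f(B)\Vert^2)) = N(I_{f(B)})\, N(J)$, and therefore $N(J) \leq N((\Vert f(B)\Vert^2))$. Thus it suffices to bound the right-hand side above by $c'\,|d'|^m$.

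Next I would expand the norm of the principal ideal through the conjugates. Using $N((\alpha)) = |N_{\K/\Q}(\alpha)| = \prod_\sigma |\sigma(\alpha)|$, where $\sigma$ runs over the embeddings of $\K$, together with the identity $\sigma(\Vert M\Vert^2) = \Vert \sigma(M)\Vert^2$ recorded just before the lemma, I obtain $N((\Vert f(B)\Vert^2)) = \prod_\sigma \Vert \sigma(f(B))\Vert^2$. For each fixed $\sigma$, the triangle inequality and submultiplicativity of the Frobenius norm give
$$\Vert \sigma(f(B))\Vert = \Big\Vert \sum_{i=1}^k \sigma(A_i)\,\sigma(B)^{m_i}\Big\Vert \leq \sum_{i=1}^k \Vert \sigma(A_i)\Vert\, \Vert \sigma(B)\Vert^{m_i} \leq \Big(\sum_{i=1}^k \Vert \sigma(A_i)\Vert\Big)\, \max(1,\Vert\sigma(B)\Vert)^m.$$
The factor $\max(1,\Vert\sigma(B)\Vert)$ makes this estimate valid whether or not $\Vert\sigma(B)\Vert < 1$, so both cases of A.2 are handled uniformly.

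Finally, taking the product over the finitely many embeddings $\sigma$ yields $N((\Vert f(B)\Vert^2)) \leq \left(\prod_\sigma \big(\sum_i \Vert\sigma(A_i)\Vert\big)^2\right)\left(\prod_\sigma \max(1,\Vert\sigma(B)\Vert)^2\right)^m$, which is of the form $c'\,|d'|^m$ with $c'$ and $d'$ depending only on $A_1,\ldots,A_k$, $B$ and on $\K$, hence independent of $\mathbf{m}$. Combined with $N(J) \leq N((\Vert f(B)\Vert^2))$ this gives the claim. I expect the only genuinely delicate step to be the justification already supplied before the lemma, namely that $\Vert f(B)\Vert^2 \in R$ so that the ideal norms and the identity $\sigma(\Vert M\Vert^2) = \Vert\sigma(M)\Vert^2$ are meaningful; once that is invoked, the remaining estimates are routine applications of the triangle inequality and submultiplicativity of $\Vert\cdot\Vert$.
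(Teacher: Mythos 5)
Your proposal is correct and follows essentially the same route as the paper: the paper's proof is the single triangle-inequality estimate $\Vert \sum_{i=1}^k A_iB^{m_i} \Vert \leq \sum_{i=1}^k \Vert A_i \Vert \Vert B \Vert^{m_i}$, relying on the norm multiplicativity and conjugate identity $\sigma(\Vert M\Vert^2)=\Vert\sigma(M)\Vert^2$ set up just before the lemma, exactly as you do. Your version merely spells out the details the paper leaves implicit (the bound $N(I_{f(B)})\geq 1$, the product over embeddings, and the uniform factor $\max(1,\Vert\sigma(B)\Vert)^m$), which is a faithful and slightly more careful rendering of the same argument.
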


   \begin{proof}
   Proof follows by the triangle inequality  
   
 $$\Vert \sum_{i=1}^k A_iB^{m_i} \Vert \leq \sum_{i=1}^k \Vert A_i \Vert \Vert B \Vert^{m_i}.$$

   
    \end{proof}

 Combining these two lemmas with the observation $  N(I_{f(B)}) = \tfrac{ N(\Vert f(B) \Vert^2)}{N(J)} $, we get the following proposition.
 \begin{pro} There exist non-zero constants $c_1$ and $d_1$  depending on $A_1,A_2$  $, \ldots, A_k, B$ and not depending on $\textbf{m} \in \N^k,$ such that  $N(I_{f(B)}) \geq c_1 \vert d_1 \vert^m$, where $m$ is the maximum component of $\mathbf{m}$.
 
 \end{pro}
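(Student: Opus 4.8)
The plan is to derive the proposition purely by combining the two lemmas already established through the multiplicativity of the ideal norm, so that the only real work is bookkeeping with the constants together with a check that every quantity involved is nonzero. Recall that $f(B)=\sum_{i=1}^k A_iB^{m_i}$ is nonzero by hypothesis (B.1) applied to $S=\{1,2,\ldots,k\}$, so $\Vert f(B)\Vert^2$ is a nonzero element of $R$ and $(\Vert f(B)\Vert^2)$ is a genuine nonzero ideal. Since $\Vert f(B)\Vert^2\in I_{f(B)}$, the factorization $I_{f(B)}J=(\Vert f(B)\Vert^2)$ defines $J$, and taking norms gives the identity
$$N(I_{f(B)})\,N(J)=N\big((\Vert f(B)\Vert^2)\big)=N(\Vert f(B)\Vert^2),$$
which I will use in the form $N(I_{f(B)})=N(\Vert f(B)\Vert^2)/N(J)$, exactly as recorded just before the statement.

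First I would record the two inputs in a common notation to avoid a clash of symbols. From Lemma~\ref{lem lower bound}, after taking the product over the conjugates of $\Vert f(B)\Vert$ as indicated, there are constants $c_2,d_2$, independent of $\mathbf m$, with $N(\Vert f(B)\Vert^2)\ge c_2\,\vert d_2\vert^{m}$; and from the second lemma above there are constants $c_3,d_3$, independent of $\mathbf m$, with $N(J)\le c_3\,\vert d_3\vert^{m}$. All four constants depend only on $A_1,\ldots,A_k$ and $B$, inheriting that dependence from the two lemmas.

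Then I would simply divide. Substituting the lower bound in the numerator and the upper bound in the denominator of $N(I_{f(B)})=N(\Vert f(B)\Vert^2)/N(J)$ yields
$$N(I_{f(B)})\;\ge\;\frac{c_2\,\vert d_2\vert^{m}}{c_3\,\vert d_3\vert^{m}}\;=\;\frac{c_2}{c_3}\,\Bigl\vert\frac{d_2}{d_3}\Bigr\vert^{m},$$
so the proposition holds with $c_1=c_2/c_3$ and $d_1=d_2/d_3$. Both are nonzero because $c_2,c_3,d_2,d_3$ are nonzero, and both are independent of $\mathbf m$ because each of the four inputs is.

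The step I expect to carry the weight is not the algebra above but the validity of the two inequalities being combined, which is where I would be most careful. The division is legitimate only because $N(J)\neq 0$, i.e. because $J$ is a nonzero ideal, which in turn rests on $f(B)\neq 0$ from (B.1); without (B.1) the whole scheme collapses. I would also verify that the inequalities point the right way after division, namely that the lower bound sits in the numerator and the upper bound in the denominator. It is exactly this pairing, growth of $N(\Vert f(B)\Vert^2)$ set against controlled size of $N(J)$, that makes $\vert d_1\vert$ encode the genuine growth in $m$ that the later finiteness argument will need when comparing $N(I_{f(B)})$ with the bounded norm of the fixed divisor $d(M_n(R),f)$.
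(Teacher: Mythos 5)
Your proof is correct and follows essentially the same route as the paper: both combine Lemma~\ref{lem lower bound} (after taking conjugates to get $N(\Vert f(B)\Vert^2)\ge c_2\vert d_2\vert^m$) with the upper bound on $N(J)$, via the multiplicativity identity $N(I_{f(B)})N(J)=N(\Vert f(B)\Vert^2)$ stated just before the proposition. Your explicit bookkeeping of the constants and the observation that (B.1) guarantees $f(B)\neq 0$, hence $N(J)\neq 0$, makes the division step more careful than the paper's one-line justification, but the argument is the same.
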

  
  We end this section with the following lemma.
  
  \begin{lemma}\label{lem upper} For a polynomial $f =\sum_{i =1}^k A_ix^{m_i} \in M_n(R)[x] $, there exist constants $c_3,c_4,c_5$ and $c_6$ not depending on $\mathbf{m}$ such that 
  
   $$N(d(M_n(R),f)) \leq c_3 N(a_1)^{c_4}\mathrm{exp} \left(c_5 m^{\tfrac{c_6}{\mathrm{log( log} m)}}\right)$$
   
  \end{lemma}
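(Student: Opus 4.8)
The plan is to reduce the matrix fixed divisor to an ordinary (scalar) fixed divisor over $R$ and then to invoke the sparse–polynomial bound of Vajaitu and Zaharescu. The bridge is the elementary observation that evaluating $f$ on \emph{scalar} matrices already produces generators of $d(M_n(R),f)$. Concretely, for $c \in R$ write $f(cI) = \big(g_{pq}(c)\big)_{1 \le p,q \le n}$, where $g_{pq}(x) = \sum_{i=1}^k (A_i)_{pq}\, x^{m_i} \in R[x]$ is the entry polynomial at position $(p,q)$. Each value $g_{pq}(c)$ is an entry of $f(cI)$, hence lies in $d(M_n(R),f)$, so the scalar fixed divisor $d(R,g_{pq})=(\{g_{pq}(c): c\in R\})$ satisfies $d(R,g_{pq}) \subseteq d(M_n(R),f)$. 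Since the norm is the index $|R/I|$ and a larger ideal has smaller index, this yields $N(d(M_n(R),f)) \le N(d(R,g_{pq}))$ for every $(p,q)$.

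First I would fix a position $(p_0,q_0)$ for which $g_{p_0q_0}$ is not the zero polynomial; such a position exists because the $A_i$ are not all zero, so some entry $(A_{i_0})_{p_0q_0}$ is nonzero, and then $g_{p_0q_0}$ takes a nonzero value on the infinite ring $R$, making $d(R,g_{p_0q_0})$ a nonzero ideal. Writing $a_1$ for the corresponding nonzero coefficient of $g:=g_{p_0q_0}$, we are reduced to bounding $N(d(R,g))$ for a single $k$-term sparse polynomial of degree $m$ over the number ring $R$. The desired inequality then follows from the scalar estimate of Vajaitu and Zaharescu \cite{vajaituapp1}, whose shape is exactly $N(d(R,g)) \le c_3 N(a_1)^{c_4}\exp\!\left(c_5\, m^{c_6/\log\log m}\right)$; transporting the constants back through the inclusion above gives the statement for $d(M_n(R),f)$.

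For completeness I would recall the mechanism behind the scalar bound, since that is where all the work sits. One factors $d(R,g)=\prod_{\mathfrak p} \mathfrak p^{e_{\mathfrak p}}$ and splits the primes according to whether $N(\mathfrak p)>m$ or $N(\mathfrak p)\le m$. For the large primes, after removing the $\mathfrak p$-part of the content $(a_1,\dots,a_k)$ one checks that $g \bmod \mathfrak p$ is a nonzero polynomial of degree $\le m < N(\mathfrak p)$ and therefore takes a nonzero value on the residue field $\F_{N(\mathfrak p)}$; hence the large-prime part of $d(R,g)$ coincides with the large-prime part of the content, which divides $(a_1)$ and contributes at most a bounded power $N(a_1)^{c_4}$. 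The genuine difficulty is the small-prime part $\prod_{N(\mathfrak p)\le m}\mathfrak p^{e_{\mathfrak p}}$, and here sparseness is essential: on $\F_{N(\mathfrak p)}^{\times}$ one has $x^{m_i}=x^{\,m_i \bmod (N(\mathfrak p)-1)}$, so $g$ can vanish identically only when the reduced exponents collide, forcing divisibility relations of the type $N(\mathfrak p)-1 \mid m_i-m_j$. Estimating $\sum_{N(\mathfrak p)\le m} e_{\mathfrak p}\log N(\mathfrak p)$ under these constraints, together with the prime-power–modulus refinement controlling each $e_{\mathfrak p}$, is the hard analytic step; it is precisely what produces the anomalous factor $\exp(c_5\, m^{c_6/\log\log m})$, the $\log\log m$ reflecting the maximal order of the number of prime factors of the relevant exponent differences. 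I expect this small-prime estimate to be the main obstacle, and once it is granted—as it is in the cited work of Vajaitu and Zaharescu—the reduction to scalars makes the matrix statement immediate.
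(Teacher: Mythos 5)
Your proof takes essentially the same route as the paper: reduce $d(M_n(R),f)$ to the scalar fixed divisor of an entry polynomial (obtained by evaluating $f$ at scalar matrices $cI$), note the inclusion $d(R,g)\subseteq d(M_n(R),f)$ and the resulting reversal of norms, and then invoke the Vajaitu--Zaharescu bound for sparse polynomials over $R$. If anything, your write-up is slightly more careful than the paper's, since you explicitly select a position $(p_0,q_0)$ whose entry polynomial is nonzero, whereas the paper simply takes the $(1,1)$ entry without comment.
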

  
  \begin{proof}

   We construct a polynomial $  \sum_{i=0}^k a_ix^{m_i} \in R[x] $ where each $a_i$ is (1,1) th (or some fixed position) entry of the matrix $A_i$. Now observe that 
   $$ d(M_n(R),f) \supseteq  d(R,f)\supseteq d(R,g).$$ Hence, we have the following
   $$ N(d(M_n(R),f)) \leq  N(d(R,f)) \leq N(d(R,g)).$$ Now using the fact that
      $  N(d(R,g)) \leq c_3 N(a_1)^{c_4}\mathrm{exp} \left(c_5 m^{\tfrac{c_6}{\mathrm{log( log} m)}}\right)$ (see \cite{vajaituapp1}, Prop. 2), where $c_3,c_4,c_5$ and $c_6$ are constants not depending on $\mathbf{m}$, we conclude that the lemma holds.  
     \end{proof}

     \section{ A Generalization of Selfridge's question in the case of ring of matrices}\label{section generalization}

   We start this section with our   main theorem.

  \begin{theorem}\label{main theorem} Let $f= \sum_{i =1}^k A_ix^{m_i} \in M_n(R)[x]$ be a polynomial and $B \in M_n(R)$ be a matrix satisfying A.1 and A.2 of the Question \ref{main qu}. Then there are finitely many tuples in $\N^k$ such that B.1 and B.2 of   Question \ref{main qu} are satisfied.   

\end{theorem}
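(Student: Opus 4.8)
The plan is to combine the lower and upper bounds for fixed‑divisor norms established in Section~\ref{secction bounds} into a single inequality that can hold for only finitely many tuples $\mathbf{m}$. Recall that condition B.2 of Question~\ref{main qu} translates, via the identification of ideals of $M_n(A)$ with $M_n(I)$ and the equivalence $M_n(I)\subseteq M_n(J)\iff I\subseteq J$, into the containment $I_{f(B)}\supseteq d(M_n(R),f)$. Whenever one ideal contains another in the ring of integers $R$ of a number field, the norm of the larger ideal divides the norm of the smaller one, and in particular $N(I_{f(B)})\le N(d(M_n(R),f))$. This is the pivot of the whole argument: B.2 forces the \emph{lower} bound on $N(I_{f(B)})$ to be dominated by the \emph{upper} bound on $N(d(M_n(R),f))$.

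First I would invoke the Proposition of Section~\ref{secction bounds}, which provides non‑zero constants $c_1,d_1$, depending only on $A_1,\ldots,A_k,B$ and not on $\mathbf{m}$, with
$$
N(I_{f(B)}) \;\ge\; c_1\,\vert d_1\vert^{m},
$$
where $m=\max_i m_i$. Here the hypothesis A.2 is what makes $\vert d_1\vert>1$: the eigenvalues of $B^*B$ all have modulus strictly away from $1$, so $\Vert f(B)\Vert$ grows genuinely exponentially in $m$ (the case of moduli greater than $1$ is reduced to the case treated in Lemma~\ref{lem lower bound} by passing to $B^{-1}$). Next I would invoke Lemma~\ref{lem upper}, giving constants $c_3,c_4,c_5,c_6$, again independent of $\mathbf{m}$, with
$$
N(d(M_n(R),f)) \;\le\; c_3\,N(a_1)^{c_4}\,\mathrm{exp}\!\left(c_5\,m^{\tfrac{c_6}{\mathrm{log(log}\,m)}}\right).
$$
Since the coefficients $A_1,\ldots,A_k$ are fixed, $N(a_1)$ is a constant, so the right‑hand side is of the form $\exp\!\bigl(c_5\,m^{o(1)}\bigr)$ up to a multiplicative constant; crucially the exponent $m^{c_6/\log\log m}$ is sub‑exponential, indeed $m^{c_6/\log\log m}=o(m)$ as $m\to\infty$.

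Chaining these through B.2 yields
$$
c_1\,\vert d_1\vert^{m} \;\le\; N(I_{f(B)}) \;\le\; N(d(M_n(R),f)) \;\le\; c_3\,N(a_1)^{c_4}\,\mathrm{exp}\!\left(c_5\,m^{\tfrac{c_6}{\mathrm{log(log}\,m)}}\right).
$$
Taking logarithms, the left side grows like $m\log\vert d_1\vert$ (a genuinely linear term in $m$, with positive coefficient since $\vert d_1\vert>1$), while the right side grows only like $c_5\,m^{c_6/\log\log m}=o(m)$. Hence the inequality fails for all sufficiently large $m$, so there is an effective bound $m\le M_0$ with $M_0$ depending only on the data $A_1,\ldots,A_k,B$. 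Finally, each component $m_i$ satisfies $0\le m_i\le m\le M_0$, so only finitely many tuples $\mathbf{m}\in\N^k$ can satisfy B.2; a fortiori only finitely many satisfy both B.1 and B.2, which proves the theorem.

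The main obstacle is verifying that the two bounds really are comparable in the way claimed, i.e.\ confirming that condition B.2 does give the ideal containment $I_{f(B)}\supseteq d(M_n(R),f)$ in the precise form needed, and checking that the sub‑exponential growth rate $m^{c_6/\log\log m}$ from Lemma~\ref{lem upper} is dominated by the linear-in-$m$ logarithm of the lower bound. Once the asymptotic comparison $m^{c_6/\log\log m}=o(m)$ is in hand, the contradiction for large $m$ is immediate and the finiteness follows; the rest of the argument is bookkeeping about ideal norms and the reduction of the $B^*B$‑eigenvalue hypothesis to the single case handled in Lemma~\ref{lem lower bound}.
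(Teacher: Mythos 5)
Your proposal follows essentially the same route as the paper's own proof: translate B.2 into the containment $d(M_n(R),f)\subseteq I_{f(B)}$, deduce $N(I_{f(B)})\le N(d(M_n(R),f))$, and sandwich this between the exponential lower bound $c_1\vert d_1\vert^m$ of the Proposition and the sub-exponential upper bound of Lemma \ref{lem upper} to conclude that $m$, and hence the number of tuples, is bounded. If anything you are slightly more careful than the paper, citing the Proposition (rather than Lemma \ref{lem lower bound} directly) and spelling out why $m^{c_6/\log\log m}=o(m)$ forces the contradiction for large $m$.
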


 \begin{proof}
 
 We know that $d(M_n(R),f)$ is the ideal in $R$ generated by all the entries of $f(A)\ \forall\ A\ \in M_n(R)  .$ Also, the condition B.2 says  that $I_{f(B)}$ contains the ideal in $R$ generated by all the entries of $f(A)\ \forall\ A\ \in M_n(R)  $. Hence B.2  is equivalent to saying  that $d(M_n(R),f) \subseteq I_{f(B)}.$ This implies that  $N(d(M_n(R),f)) \geq N(I_{f(B)}).$   Invoking Lemma \ref{lem lower bound} and Lemma \ref{lem upper} we get the following inequality for  $N(d(M_n(R),f))$  
 
 $$ c_1 \mid d \mid^m     \leq N(d(M_n(R),f)) \leq c_3   N(a_1)^{c_4}\mathrm{exp} \left(c_5 m^{\tfrac{c_6}{\mathrm{log( log} m)}}\right).$$ 

    Now we   compare the bounds of $ N(d(M_n(R),f))$ to get the desired result that $m$ is bounded above. 
    
   

   
 \end{proof}

 We strengthen Theorem \ref{main theorem} in the case when $R$ is a special domain to get a generalization of Theorem \ref{thm:number ring}
 \begin{theorem}\label{main theorem2}
 Let $R$ be   the ring of rational integers $\Z$ or the ring of integers in
an imaginary quadratic number field and let $A_1,A_2,\ldots A_k$ and $B$ be non-zero elements of $M_n(R)$.
Then there are only finitely many elements $B$ in $M_n(R)$ for which there exist $k$ tuples $(n_1,n_2, \ldots, n_k) \in \N^k$, not all zero, satisfying   the following simultaneously
\begin{enumerate}
\item  the ideal generated by  $    \sum_{i=1}^k A_iB^{n_i}$ contains the ideal generated by

$ \sum_{i=1}^k A_ix^{n_i} \quad \forall\ x \in M_n(R) ;$
\item $ \sum_{i \in S} A_iB^{n_i} \neq 0\ \forall S \subseteq \lbrace 1,2,\ldots,k \rbrace.$
\end{enumerate}

\end{theorem}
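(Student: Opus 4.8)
The plan is to mirror the passage from Theorem \ref{vajaitu app1} to Theorem \ref{thm:number ring}, trading the uniform upper bound of Lemma \ref{lem upper} against a lower bound that grows with the size of $B$, and then invoking the fact that $\Z$ and the ring of integers of an imaginary quadratic field contain only finitely many elements of bounded norm. First I would record that condition (1) is really an \emph{equality} of ideals. Since the entries of $\sum_{i=1}^k A_iB^{n_i}=f(B)$ are among the values that generate $d(M_n(R),f)$, one always has $I_{f(B)}\subseteq d(M_n(R),f)$; hence condition (1) forces $I_{f(B)}=d(M_n(R),f)$, so that $N(I_{f(B)})=N(d(M_n(R),f))$. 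By Lemma \ref{lem upper} the right-hand side is bounded by a quantity that is subexponential in $m$ and, crucially, \emph{independent of $B$}. This is the analogue of the scalar identity in which $f(\beta)$ is forced to be an associate of $d(R,f)$.

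Next I would manufacture a lower bound for $N(I_{f(B)})$ that grows with $m$. The key simplifying observation special to these rings is that every nonzero element of $R$ has absolute value at least $1$, so the branch of (A.2) in which all moduli are strictly less than $1$ cannot occur for a nonzero $B\in M_n(R)$; thus (A.2) means that $B$ is \emph{expanding}, the least eigenvalue of $B^*B$ exceeding $1$. Re-running the argument of Lemma \ref{lem lower bound} and of the proposition following it — but now tracking how the constants depend on $B$ — should give $N(I_{f(B)})\ge c_1\,\vert d_1\vert^{\,m}$ with $\vert d_1\vert>1$ controlled by the smallest singular value of $B$. Comparing this with the uniform subexponential bound of Lemma \ref{lem upper} then forces $m$ to be bounded, exactly as in the proof of Theorem \ref{main theorem}.

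It then remains to bound $B$ itself, and here I would split on the size of $B$. If $\Vert B\Vert\le T$ for a threshold $T$ depending only on $A_1,\dots,A_k$, then, since $\Z$ and the ring of integers of an imaginary quadratic field contain only finitely many elements of a given bounded absolute value, there are only finitely many $B\in M_n(R)$ with $\Vert B\Vert\le T$, and these account for finitely many admissible $B$. If instead $\Vert B\Vert>T$, I would combine the equality $N(I_{f(B)})=N(d(M_n(R),f))$ with the determinant relation $\det f(B)\in I_{f(B)}$ and the expanding hypothesis — which over these rings gives $N(\det B)\ge 2$ — to bound $\Vert B\Vert$ in terms of the already-bounded $m$. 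Together the two cases leave only finitely many $B$.

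The step I expect to be the main obstacle is the \emph{uniformity of the lower bound in $B$}: the constant $\vert d_1\vert$ in $N(I_{f(B)})\ge c_1\vert d_1\vert^{\,m}$ is governed by the least singular value of $B$, which under (A.2) exceeds $1$ but may approach $1$ as $B$ ranges over $M_n(R)$, so a single comparison with Lemma \ref{lem upper} need not bound $m$ and $\Vert B\Vert$ simultaneously. Overcoming this is precisely where the restriction on $R$ enters: $\Z$ and imaginary quadratic rings are exactly the number rings with finite unit group, hence the only ones enjoying the Northcott-type finiteness of elements of bounded norm (which fails, for instance, over real quadratic fields because of infinitely many units). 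This is the same mechanism that separates Theorem \ref{thm:number ring} from Theorem \ref{vajaitu app1}, and I expect the matrix case to follow the same pattern once the constants in Lemma \ref{lem lower bound} are made explicit in terms of the singular values of $B$.
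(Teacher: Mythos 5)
Your proposal has a genuine gap, and it is the one you flag yourself in the closing paragraph: the circular dependence between the bound on $m$ and the bound on $\Vert B\Vert$. Your lower bound $N(I_{f(B)})\ge c_1\vert d_1\vert^m$ has constants governed by the singular values of $B$; as $B$ ranges over $M_n(R)$ the base $\vert d_1\vert$ can approach $1$ and the constant $c_1$ can degenerate, so comparison with the $B$-independent bound of Lemma \ref{lem upper} only yields a bound on $m$ that depends on $B$, and your subsequent step of bounding $\Vert B\Vert$ ``in terms of the already-bounded $m$'' is then circular. Note also that the theorem does not assume (A.2) at all: $B$ ranges over all nonzero matrices, including singular ones (for which $B^*B$ has zero eigenvalues) and ones whose singular values straddle $1$ (e.g.\ matrices with unit determinant and large entries), so the ``expanding'' reduction and the re-run of Lemma \ref{lem lower bound} are not available in the generality required. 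Finally, there is a quantifier problem working against your plan: the claim is finiteness of the set of admissible $B$, with the exponent tuple existentially quantified and a priori unbounded, so no argument that begins by bounding $m$ separately for each fixed $B$ can conclude unless that bound is uniform in $B$ --- which is exactly what fails.

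The paper's proof is structured quite differently: it never bounds $m$ and does not use the fixed-divisor lemmas of Section \ref{secction bounds} at all. Instead it evaluates $f$ at the scalar matrices $2^jI$, $0\le j\le k-1$; a Vandermonde determinant argument shows these $k$ values cannot all vanish, and they satisfy the $B$-free upper bound $\Vert f(2^jI)\Vert < 2^{(k-1)m_1}\, n^{m_1/2}\sum_{i}\Vert A_i\Vert$. On the other side, once $\Vert B\Vert\ge 2\sum_{i\ge 2}\Vert A_i\Vert/\Vert A_1\Vert$, the triangle inequality gives $\Vert f(B)\Vert\ge \tfrac{1}{2}\Vert A_1\Vert\,\Vert B\Vert^{m_1}$. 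The ideal containment (1), in $\Z$ or an imaginary quadratic ring, is then used to get $\Vert f(B)\Vert\le\Vert f(2^jI)\Vert$. Since both sides of the resulting inequality grow exponentially in $m_1$, comparing them bounds $\Vert B\Vert$ by roughly $2^{k-1}\sqrt{n}$ times a constant depending only on the $A_i$, \emph{uniformly in the exponents}; finiteness follows because only finitely many matrices over these rings have bounded norm. This is the ingredient your plan is missing: a comparison in which the $B$-dependence sits in the base of the exponential on the favorable side, obtained by playing $\Vert B\Vert^{m_1}$ against $(2^{k-1}\sqrt{n})^{m_1}$ rather than against the subexponential fixed-divisor bound of Lemma \ref{lem upper}.
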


\begin{proof}

The value of the Vandermonde matrix
\[
 \begin{vmatrix}
1 & 2^{m_1} & 2^{2m_1}  & \dots & 2^{(k-1)m_1} \\ 
1 & 2^{m_2} & 2^{2m_2}  & \dots & 2^{(k-1)m_2} \\
 \vdots &    \vdots &  \vdots &  \ddots &  \vdots &\\
1 & 2^{m_k} & 2^{2m_k}  & \dots &2^{(k-1)m_k}
\end{vmatrix}
\]

cannot be zero. Consequently, the matrices

 $$f(2^jI)= \sum_{i =1}^k A_i(2^jI)^{m_i}\ \mathrm{where}\ I\ \mathrm{is\ the\ identity\ matrix\ and}\ 0 \leq j \leq k-1,$$
cannot be the zero matrix for all $0 \leq j \leq k-1,$ as this would imply that some combination of the columns of the above Vandermonde matrix is zero. Now we have

\begin{equation}\label{eqn bound 1}
 \Vert f(2^jI)  \Vert  \leq \sum_{i =1}^k \Vert A_i \Vert  {n}^{ \tfrac{m_i}{2}} 2^{(k-1)m_i}     <  2^{(k-1)m_1} {n}^{ \tfrac{m_1}{2}} \sum_{i =1}^k     \Vert A_i \Vert .   
\end{equation}

If $\Vert B \Vert \geq  \tfrac{2 \sum_{i =2}^k  \Vert A_i \Vert  }{\Vert A_1\Vert},$ then we have

\begin{equation*}
 \Vert  \sum_{i =2}^k    A_i  B^{m_i}  \Vert \leq \sum_{i =2}^k \Vert A_i\Vert \Vert  B\Vert^{m_i} \leq \tfrac{\Vert A_1 \Vert \Vert B \Vert^{m_1} }{ 2 } , 
\end{equation*}
 
which implies 
\begin{equation}\label{eqn bound 2}
  \tfrac{\Vert A_1 \Vert \Vert B \Vert^{m_1} }{ 2 } \leq \Vert f(B) \Vert .
\end{equation}
The condition  $f(B) \supseteq f(2^jI)$ implies $\Vert f(B) \Vert  \leq \Vert f(2^jI) \Vert$ for the ring under consideration. If $\Vert B \Vert$ is large enough then the lower bound in the  Eq. (\ref{eqn bound 2}) is greater than the upper bound in the Eq. (\ref{eqn bound 1}), which is a contradiction. Since only finitely many elements in $M_n(R)$ can have a given norm, hence our proof is done. 

\end{proof}

We end this section with the following question.

\begin{Qu} Can we find the tuples explicitely which are the answer to the Question \ref{main qu}?
\end{Qu}

\section{Generalization to several variables}\label{secction further}

   We can extend Theorem \ref{main theorem} and Theorem  
   \ref{main theorem2}  to the multivariate case by induction on the number of variables to get a generalization of Theorem \ref{choi zahe} and Theorem \ref{choi zahe1}. Here we state the results formally for the sake of completeness and omit the proofs. 
   
   For given tuples $\mathbf{m}, \mathbf{n} \in \N^k$,   $\textbf{m} \leq \textbf{n}$ means each entry of the tuple $\textbf{m}$ is less than or equal to the corresponding entry of the tuple $\textbf{n}.$ Also, we denote the tuple $(1,1, \ldots, 1) \in \N^r$  by $\mathbf{1}.$  
   \begin{theorem}
   Let $f(\underline{x}) = \sum_{\mathbf{i}=\mathbf{1}}^{\mathbf{k}}A_{\mathbf{i}} x^{m_1i_1}_1 x^{m_2i_2}_2 \ldots x^{m_ri_r}_r \in M_n(R)[\underline{x}]$ be a polynomial in $r$ variables and $B_1,B_2, \ldots, B_r$ be   matrices satisfying the following:
   \begin{itemize}
   \item The ideal generated by    $B_i$  
is not the whole ring for all $i=1,2, \ldots, r$.
   \item All the  eigenvalues of $B_i^*B_i$ are non-zero and      their modulus   is either strictly  less than $1$  or  strictly   greater than $1$ for all $i=1,2, \ldots, r$. Here $B_i^*$ is  the conjugate transpose of  $B_i$.

\end{itemize}   
 Then, there are only finitely many tuples $(\mathbf{m}_1, \mathbf{m}_2 \ldots, \mathbf{m}_r) \in \N^{k_1} \times \N^{k_2} \times \ldots \times \N^{k_r}  $ where $\mathbf{m}_j=(m_{j1}, m_{j2}, \ldots, m_{jk_j
   })$ such that
   
   \begin{enumerate}

     \item  $  \Sigma_{\mathbf{i} \in S } A_{\mathbf{i}} B^{m_1i_1}_1 B^{m_2i_2}_2 \ldots B^{m_ri_r}_k \neq 0,$     for any nonempty set $S$ of  
      $\lbrace 1, 2, \ldots, k_1 \rbrace \times
    \cdots \times \lbrace 1, 2, \ldots, k_r \rbrace ,$

   \item the ideal generated by $f(B_1,B_2, \ldots, B_r)$ contains the ideal generated by $\{ f(A_1,A_2, \ldots, A_r)\ \forall\ (A_1,A_2, \ldots, A_r) \in M_n(R)^r  \}.$
    \end{enumerate}

   \end{theorem}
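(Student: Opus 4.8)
The plan is to argue by induction on the number $r$ of variables, the case $r=1$ being Theorem \ref{main theorem}. Exactly as there, condition (2) is equivalent to $d(M_n(R),f) \subseteq I_{f(B_1,\ldots,B_r)}$, so that $N(d(M_n(R),f)) \ge N(I_{f(B_1,\ldots,B_r)})$, and the whole problem reduces to trapping this ideal norm between an exponentially growing lower bound and a subexponential upper bound in the exponents. Writing $m_j = \max_{1 \le i_j \le k_j} m_{j i_j}$, finiteness of the set of admissible tuples is the same as a uniform bound on each $m_j$, since the lengths $k_1, \ldots, k_r$ are fixed.

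The core of the induction is the multivariate analogue of Lemma \ref{lem lower bound}: there should exist constants $c>0$ and $d_1, \ldots, d_r$ with $|d_j|>1$, all independent of $(\mathbf{m}_1,\ldots,\mathbf{m}_r)$, such that
$$ \Big\Vert f(B_1, \ldots, B_r) \Big\Vert \ \ge\ c \prod_{j=1}^r |d_j|^{m_j}. $$
To obtain this I would group $f$ by the powers of the last variable, writing
$$ f(B_1, \ldots, B_r) = \sum_{i_r=1}^{k_r} C_{i_r}(B_1,\ldots,B_{r-1})\, B_r^{m_{r i_r}}, \qquad C_{i_r}(\underline{x}') = \sum_{i_1,\ldots,i_{r-1}} A_{\mathbf{i}}\, x_1^{m_{1 i_1}} \cdots x_{r-1}^{m_{r-1, i_{r-1}}}, $$
and treating this as a one-variable expression in $B_r$ with matrix coefficients $C_{i_r}(B_1,\ldots,B_{r-1}) \in M_n(R)$. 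Running the compactness argument of Lemma \ref{lem lower bound} in the variable $x_r$ should produce the factor $|d_r|^{m_r}$ together with a lower bound on a nonempty subsum $\sum_{i_r \in A} C_{i_r}(B_1,\ldots,B_{r-1}) B_r^{m_{r i_r} - b}$, and the induction hypothesis applied to this $(r-1)$-variable subsum then recovers the remaining factor $c' \prod_{j<r} |d_j|^{m_j}$. Condition (1), in the strong form that every nonempty subsum of the terms $A_{\mathbf{i}} B_1^{m_{1 i_1}} \cdots B_r^{m_{r i_r}}$ is nonzero, is precisely what prevents the limiting subsum from vanishing at each stage.

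With the norm lower bound available, the passage to $N(I_{f(B_1,\ldots,B_r)})$ is verbatim the one-variable argument: $\Vert f(\underline{B}) \Vert^2 \in I_{f(\underline{B})}$, taking the product over the Galois conjugates gives $N(\Vert f(\underline{B})\Vert^2) \ge c'' \prod_j |d_j|^{2 m_j}$, and the complementary ideal $J$ with $I_{f(\underline{B})} J = (\Vert f(\underline{B})\Vert^2)$ is controlled from above by the triangle inequality $\Vert f(\underline{B})\Vert \le \sum_{\mathbf{i}} \Vert A_{\mathbf{i}}\Vert \prod_j \Vert B_j\Vert^{m_{j i_j}}$ exactly as before. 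This yields $N(I_{f(\underline{B})}) \ge c_1 \prod_j |d_j|^{m_j} \ge c_1 |d|^{M}$ with $M = \max_j m_j$ and $|d| = \min_j |d_j| > 1$. For the upper bound I would mirror Lemma \ref{lem upper}: projecting each coefficient $A_{\mathbf{i}}$ to a fixed matrix entry produces a scalar polynomial $g \in R[\underline{x}]$ with $d(M_n(R),f) \supseteq d(R,g)$, and the multivariate subexponential estimate for $N(d(R,g))$ of Choi and Zaharescu \cite{choi} bounds $N(d(M_n(R),f))$ from above by a quantity subexponential in each $m_j$. Comparing the two estimates forces $M$, hence every $m_j$, to be bounded, leaving only finitely many tuples.

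The main obstacle is the uniformity demanded by the inductive lower bound. When Lemma \ref{lem lower bound} is invoked in the last variable, the constant multiplying $|d_r|^{m_r}$ is itself a lower bound on a combination of the coefficients $C_{i_r}(B_1,\ldots,B_{r-1})$, and these coefficients vary with $\mathbf{m}_1,\ldots,\mathbf{m}_{r-1}$; one must therefore extract the stabilizing subsequence jointly in all $r$ exponent directions rather than one variable at a time, and check that the limiting stabilized sub-sum is a genuine nonempty subsum of the original $f(\underline{B})$ so that condition (1) may be applied to contradict its vanishing. Once this joint nonvanishing is secured, the constants $c, d_1, \ldots, d_r$ emerge independent of the tuple, and the remainder of the argument is routine.
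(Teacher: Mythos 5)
Your overall strategy --- induction on the number of variables, transplanting Lemma \ref{lem lower bound}, the complementary-ideal step, and Lemma \ref{lem upper} --- is exactly what the paper intends: the paper states this theorem \emph{without proof}, saying only that it can be obtained from Theorem \ref{main theorem} ``by induction on the number of variables.'' So your proposal is actually more explicit than the paper itself, and its routine parts (the reformulation of condition (2) as $d(M_n(R),f)\subseteq I_{f(B_1,\ldots,B_r)}$, the passage from a norm lower bound to a bound on $N(I_{f(\underline{B})})$ via the ideal $J$, and the upper bound via projection to a scalar polynomial and the estimates of Choi and Zaharescu \cite{choi}) are sound transplants of the one-variable proof.

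The genuine gap is the one you flag in your final paragraph, and it is more serious than you suggest: it is not a bookkeeping issue about extracting subsequences jointly, because the matrices $B_1,\ldots,B_r$ need not commute. The one-variable argument rests on two right-multiplications: first, $\sum_i A_iB^{m_i-m}=f(B)B^{-m}$, which transfers the uniform constant $c$ to a lower bound on $\Vert f(B)\Vert$; second, the vanishing of the stabilized limit $\sum_{i\in A}A_iB^{-b_i}=0$ is converted into the vanishing of $\sum_{i\in A}A_iB^{m_{i}}$ for a tuple in the sequence (contradicting B.1 of Question \ref{main qu}) by multiplying on the right by $B^{m}$. In $r$ variables both moves break down. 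The normalizing factors $B_j^{-m_j}$ for $j<r$ would have to be inserted in the \emph{middle} of each monomial $B_1^{m_{1i_1}}\cdots B_r^{m_{ri_r}}$, so the ``normalized'' sum is not $f(\underline{B})$ times any controllable matrix; and the limiting relation $\sum_{\mathbf{i}\in S}A_{\mathbf{i}}B_1^{-b_{1,i_1}}\cdots B_r^{-b_{r,i_r}}=0$ cannot be rewritten as the vanishing of $\sum_{\mathbf{i}\in S}A_{\mathbf{i}}B_1^{m_{1,i_1}}\cdots B_r^{m_{r,i_r}}$ without commuting powers of distinct $B_j$ past one another, so it does not contradict condition (1). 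Your fallback of grouping by the last variable has the same defect one level down: after stabilizing $x_r$, the expression $\sum_{\mathbf{i}}A_{\mathbf{i}}B_1^{m_{1i_1}}\cdots B_{r-1}^{m_{r-1,i_{r-1}}}B_r^{c-b_{i_r}}$ carries fixed matrices on the \emph{right} of each monomial, so it is not of the shape $\sum A'_{\mathbf{i}}x_1^{\cdot}\cdots x_{r-1}^{\cdot}$ covered by the inductive hypothesis, and distinct terms with the same monomial cannot be merged into new coefficients. A correct proof must either assume the $B_j$ commute, or prove a strengthened one-variable lemma that is uniform over the infinite family of coefficient tuples $C_{i_r}(B_1,\ldots,B_{r-1})$ and tolerates such right-hand factors; neither is supplied in your sketch (nor, for that matter, in the paper).
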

   
   Likewise, we can make an analogue of the Theorem \ref{main theorem2} as follows

   \begin{theorem}
   Let $R$ be   the ring of rational integers $\Z$ or the ring of integers in
an imaginary quadratic number field and let $\{ A_{\mathbf{i}} : \mathbf{0} \leq \mathbf{i} \leq \mathbf{k} \}$  and $\{ B_i: 1 \leq i \leq k \}$ be non-zero elements of $M_n(R)$.
Then there are only finitely many elements $\{ B_i: 1 \leq i \leq k \}$ in $M_n(R)$ for which there exist   tuples $(\mathbf{m}_1, \mathbf{m}_2 \ldots, \mathbf{m}_r) \in \N^{k_1} \times \N^{k_2} \times \ldots \times \N^{k_r}  $, not all zero, satisfying   the following simultaneously

\begin{enumerate}

     \item $\Sigma_{\mathbf{i} \in S } A_{\mathbf{i}} B^{m_1i_1}_1 B^{m_2i_2}_2 \ldots B^{m_ri_r}_k \neq 0,$
    for any nonempty set $S$ of    $\lbrace 1, 2, \ldots, k_1 \rbrace \times
    \cdots \times \lbrace 1, 2, \ldots, k_r \rbrace ,$
      
   \item the ideal generated by $f(B_1,B_2, \ldots, B_r)$ contains the ideal generated by $\{ f(A_1,A_2, \ldots, A_r)\ \forall\ (A_1,A_2, \ldots, A_r) \in M_n(R)^r  \}.$
    \end{enumerate}
   
   \end{theorem}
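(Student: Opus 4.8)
The plan is to extend the argument proving Theorem \ref{main theorem2} from one variable to $r$ variables; one may phrase this as an induction on $r$, the base case $r=1$ being exactly Theorem \ref{main theorem2}. As there, since only finitely many matrices of $M_n(R)$ have norm below a prescribed bound, it suffices to bound the norms $\Vert B_1 \Vert, \ldots, \Vert B_r \Vert$ uniformly over all admissible configurations, and the crucial feature to preserve is that this bound be independent of the exponent data $(\mathbf{m}_1, \ldots, \mathbf{m}_r)$.

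First I would produce the $B$-independent upper bound. Specializing every variable to a scalar matrix, $x_l = 2^{j_l} I$ with $0 \le j_l \le k_l - 1$, condition (2) applies to each of the $\prod_l k_l$ grid points and gives, for the ring under consideration exactly as in the proof of Theorem \ref{main theorem2}, the inequality $\Vert f(B_1, \ldots, B_r) \Vert \le \Vert f(2^{j_1}I, \ldots, 2^{j_r}I) \Vert$, where
$$ f(2^{j_1}I, \ldots, 2^{j_r}I) = \sum_{\mathbf{i}} A_{\mathbf{i}}\, 2^{\, j_1 m_{1 i_1} + \cdots + j_r m_{r i_r}}. $$
The right-hand side has norm bounded in terms of the $A_{\mathbf{i}}$ and the exponents only, exactly as in Eq. (\ref{eqn bound 1}). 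To make this bound meaningful I need at least one grid value to be a nonzero matrix, and this is the one genuinely new ingredient: the evaluation matrix whose rows are indexed by the grid points $(j_1, \ldots, j_r)$ and whose columns are indexed by the multi-indices $\mathbf{i}$ is the Kronecker product of the $r$ one-variable Vandermonde matrices arising in the proof of Theorem \ref{main theorem2} (after, if necessary, merging terms so that within each tuple $\mathbf{m}_l$ the exponents are distinct). Its determinant is a product of nonzero factors, hence nonzero; since the $A_{\mathbf{i}}$ are not all zero, not all grid values can vanish, and I fix a $\mathbf{j}$ with $f(2^{j_1}I, \ldots, 2^{j_r}I) \ne 0$.

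For the lower bound I would argue, as in Eq. (\ref{eqn bound 2}), that once $\max_l \Vert B_l \Vert$ is large the monomial of $f(B_1, \ldots, B_r)$ of dominant multidegree separates from the remaining terms by the triangle inequality, condition (1) ensuring that the block of terms realising this dominant multidegree does not cancel. Comparing this with the exponent-dependent upper bound, the exponential factors carried by the exponents cancel on taking the appropriate roots, precisely as in the one-variable case, and leave a bound on each $\Vert B_l \Vert$ that is independent of $(\mathbf{m}_1, \ldots, \mathbf{m}_r)$. This bounds all of the bases and yields the asserted finiteness.

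The step I expect to be the main obstacle is this multivariate lower bound. Unlike the single-variable situation there is no single canonical leading term: which monomial dominates $f(B_1, \ldots, B_r)$ depends on the relative sizes of $\log \Vert B_1 \Vert, \ldots, \log \Vert B_r \Vert$, so the terms must be organised by the weighting they induce and a uniform gap between the leading block and the rest extracted. A further delicacy, already implicit in the one-variable proof, is that $\Vert B_l \Vert$ being large does not by itself force $\Vert B_l^{\,m} \Vert$ to be large (for instance when $B_l$ is nilpotent); it is precisely to keep this separation under control that it is convenient to organise the estimate as an induction on $r$, peeling off one variable at a time and invoking the one-variable bound of Theorem \ref{main theorem2} for that variable while the Kronecker--Vandermonde computation supplies the required nonvanishing.
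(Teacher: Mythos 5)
Your strategy is, as far as it goes, the same as the paper's: the paper gives no proof of this theorem at all, saying only that it can be obtained ``by induction on the number of variables'' from Theorem \ref{main theorem2}, which is exactly the plan you describe. Your observation that the evaluation matrix at the grid points $(2^{j_1}I,\ldots,2^{j_r}I)$ is the Kronecker product of one-variable Vandermonde matrices, hence nonsingular, is correct and is a genuinely useful detail that the paper never supplies.

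Nevertheless the proposal has a genuine gap, and it is precisely the one you flag but do not close: the lower bound. The one-variable inequality you propose to imitate, Eq.~(\ref{eqn bound 2}), requires $\Vert f(B)\Vert \geq \tfrac12 \Vert A_1\Vert\,\Vert B\Vert^{m_1}$, which after the triangle inequality amounts to $\Vert A_1 B^{m_1}\Vert \geq \Vert A_1\Vert\,\Vert B\Vert^{m_1}$; but the Frobenius norm is only \emph{sub}multiplicative, and this reverse inequality fails badly for non-normal matrices. For the unipotent matrix $B=\left(\begin{smallmatrix}1 & N\\ 0 & 1\end{smallmatrix}\right)\in M_2(\Z)$ one has $\Vert B^{m}\Vert \approx mN$ while $\Vert B\Vert^{m}\approx N^{m}$, so $\Vert B\Vert$ can be arbitrarily large while every term $A_iB^{m_i}$ stays small; condition (1) only rules out exact vanishing of subsums and yields no quantitative lower bound. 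Thus ``once $\max_l\Vert B_l\Vert$ is large the dominant monomial separates'' is unjustified already for $r=1$ (this flaw is inherited from the paper's own proof of Theorem \ref{main theorem2}, and no hypothesis like A.2 is available here to exclude such $B$). Your two proposed remedies do not repair it: organising the monomials by a weighting still bounds each term only from above, and invoking Theorem \ref{main theorem2} as a black box inside an induction on $r$ does not go through, because specializing the other variables at $B_2,\ldots,B_r$ produces a one-variable polynomial whose coefficients $\tilde A_{i_1}=\sum_{i_2,\ldots,i_r}A_{\mathbf i}B_2^{m_{2i_2}}\cdots B_r^{m_{ri_r}}$ depend on the unknown matrices \emph{and} on the unknown exponents, whereas that theorem asserts finiteness only for a fixed set of nonzero coefficients (there are also noncommutativity/ordering issues in matching conditions (1)--(2) for the specialization). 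A correct treatment of the multivariate induction in the commutative setting is the content of Choi--Zaharescu \cite{choi}, and any honest proof here would have to adapt their argument, not merely iterate Eq.~(\ref{eqn bound 2}).
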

   
      \section{Acknowledgement} We thank Dr. Krishnan Rajkumar and Saurav Vikash Chatterjee for their valuable suggestions. We are also thankful to the anonymous referee whose comments improved this paper.

\end{document}